\documentclass[11pt,a4paper]{article}

\usepackage[T1]{fontenc}
\usepackage[utf8]{inputenc}
\usepackage{lmodern}
\usepackage{amsmath,amssymb,amsthm,mathtools,mathrsfs}
\usepackage{array,booktabs}
\usepackage{geometry}
\usepackage{microtype}
\usepackage{hyperref,xcolor}

\usepackage{enumitem}

\hypersetup{
    hidelinks,
    pdftitle={Support profiles of full-capacity Pareto spectra of order three},
    pdfauthor={[Author]}
}
\allowdisplaybreaks

\newcommand{\R}{\mathbb{R}}
\newcommand{\diag}{\operatorname{diag}}
\newcommand{\adj}{\operatorname{adj}}
\newcommand{\Res}{\operatorname{Res}}
\newcommand{\rhoSp}{\rho}
\newcommand{\Ind}{\mathcal I}
\newcommand{\Supp}{\mathscr S}
\newcommand{\Id}{\operatorname{Id}}

\numberwithin{equation}{section}

\theoremstyle{plain}
\newtheorem{theorem}{Theorem}[section]
\newtheorem{proposition}[theorem]{Proposition}
\newtheorem{lemma}[theorem]{Lemma}
\newtheorem{corollary}[theorem]{Corollary}

\makeatletter
\renewenvironment{abstract}
  {%
    \par\smallskip
    \small
    \noindent\textbf{Abstract. }\ignorespaces
  }
  {%
    \par\medskip
  }
\makeatother

\theoremstyle{definition}
\newtheorem{definition}[theorem]{Definition}

\theoremstyle{remark}
\newtheorem{remark}[theorem]{Remark}

\title{Support profiles of full-capacity Pareto spectra of order three}
\author{
Samir Adly\thanks{Laboratoire XLIM, Université de Limoges, 123 avenue Albert Thomas, 87060 Limoges CEDEX, France.\hskip 5mm
Email: samir.adly@unilim.fr, \url{https://www.unilim.fr/pages_perso/samir.adly/}.}
}
\date{}
\begin{document}
\maketitle
\begin{abstract}
For a given real matrix $A\in\R^{n\times n}$ of order $n\geq 1$, a
Pareto eigenvalue is a scalar $\lambda\in\R$ for which there exists a
nonzero vector $x\in\R^n_+$ such that
$Ax-\lambda x\in\R^n_+$ and
$\langle x,Ax-\lambda x\rangle=0$.
It is knwon that a real matrix of order $n=3$ can have at most $9$
distinct Pareto eigenvalues. We study the matrices for which this
maximal number is attained and classify the way in which the $9$
Pareto eigenvalues are produced by their supports.

A Pareto eigenvalue may be produced by more than one support. We
therefore choose one producing support for each distinct values. The
main contribution of this paper is to prove that, for every such
choice, the numbers of values assigned to supports of sizes $1$, $2$,
and $3$ are necessarily
$$
    (1,5,3),\qquad (2,4,3),\qquad\text{or}\qquad (2,5,2).
$$
The proof uses bounds from the order $n=2$ problem and several
restrictions on singleton, pair, and full supports. In particular, the graph formed by the pair supports producing $2$
values contains no triangle. These arguments also gives anohter proof that the
maximum Pareto capacity in order $3$ is equal to $9$.

For each of the $3$ profiles, we give an explicit full-capacity matrix
with $9$ regular Pareto eigenvalues. We also prove that each profile
occurs on a nonempty Euclidean-open set where every Pareto eigenvalue
has a unique producing support.
\end{abstract}

\noindent\textbf{Keywords.}
Pareto eigenvalue; eigenvalue complementarity problems; Pareto full-capacity; support profile.

\noindent\textbf{2020 Mathematics Subject Classification.}
15A18, 15A39, 90C33.
\tableofcontents

\section{Introduction}

Let $A=(a_{ij})\in\R^{n\times n}$. Vector inequalities are understood
componentwise. A real number $\lambda\in\R$ is a \emph{Pareto eigenvalue}
of $A$ if there is a vector $x\in\R^n\setminus\{0\}$ such that
\begin{equation}\label{eq:eicp}
    x\geq0,
    \qquad
    Ax-\lambda x\geq0,
    \mbox{ and }
    \langle x,Ax-\lambda x\rangle=0.
\end{equation}
The set of Pareto eigenvalues of $A$ is denoted by $\Pi(A)$. The
\emph{Pareto capacity} of order $n$ is
$$
    c_n:=\max_{A\in\R^{n\times n}}|\Pi(A)|.
$$
A matrix $A\in\R^{n\times n}$ is called \emph{full-capacity} if
$|\Pi(A)|=c_n$, where $|E|$ denotes the cardinality of a finite set
$E$.

The support description of Pareto eigenvalues was given by
{Seeger~\cite[Theorem~4.1]{Seeger1999}}.
{The value $c_2=3$ is classical; see
\cite[Proposition~16]{QueirozJudiceHumes2004}. A complete description
of the Pareto spectra of real matrices of order $2$ was given in
\cite[Example~2 and Tables~1 and~2]{PintoDaCostaSeeger2010}.
The same paper introduced spectral histograms and their aggregated
versions \cite[Definition~3]{PintoDaCostaSeeger2010}. In
Proposition~\ref{prop:order-two-profiles}, we record the $2$
full-capacity profiles in the assigned-profile terminology used here.}

Baillon and Seeger \cite[Theorem~2]{BaillonSeeger2021} proved
that $c_3=9$. Seeger and
Vicente-P\'erez~\cite{SeegerVicente2011} gave a family of matrices of
order $n$ depending on a parameter $s$ having exactly
$3(2^{n-1}-1)$ Pareto eigenvalues under the condition that $s$ is
transcendental and $s>1+\sqrt{2}$, which means that
$c_n\geq 3(2^{n-1}-1)$. We shall use its order $n=3$ case in
Section~\ref{sec:realizations}.
{Baillon and Seeger introduced regular Pareto eigenvalues and the
regular Pareto capacity $c_n^{\rm reg}$ in
\cite{BaillonSeeger2020}. They also proved supportwise lower
stability: if a value is regularly produced by a fixed support, then,
under a small perturbation, a nearby regular value is produced by the
same support \cite[Proposition~3]{BaillonSeeger2020}.}

{Several numerical methods have been developed for cone-constrained
eigenvalue problems, including semismooth Newton methods
\cite{AdlySeeger2011}, the Lattice Projection Method
\cite{AdlyRammal2013}, and interior-point methods
\cite{AdlyHaddouLe2023}. These works also contain numerical experiments
on matrices with large Pareto spectra.}

{Baillon and Seeger also considered spectrally separable matrices,
for which each Pareto eigenvalue has a unique producing support, and
noted that these matrices form a dense set
\cite[Section~1.1]{BaillonSeeger2021}. A related notion of Pareto
profile was studied by Kielstra in \cite{Kielstra2023} using
semialgebraic geometry and perturbation theory. In particular, this
profile is constant on each connected component of the set of
$1$-principally simple matrices. The profile used there counts, for
each support, all values produced by that support. By contrast, an
assigned profile counts each distinct Pareto eigenvalue only once.
Our setting also includes full-capacity matrices for which a Pareto
eigenvalue may have more than $1$ producing support. The open profile
sets obtained in Section~\ref{sec:stability} are local and do not
require the reference matrix to be $1$-principally simple.}

{This paper concerns the way in which the $9$ Pareto eigenvalues
of a full-capacity $3\times3$ matrix are produced. If $\Pi_J(A)$
denotes the set of values produced by a support $J$, then a Pareto
eigenvalue produced by $J$ is an eigenvalue of the principal submatrix
$A_J$, subject to sign conditions on an eigenvector and on the rows
outside $J$ (see \cite[Theorem~4.1]{Seeger1999}). The same value may
belong to several sets $\Pi_J(A)$. Consequently, these sets do not in
general form a partition of $\Pi(A)$, and the quantities
$|\Pi_J(A)|$ need not sum to $|\Pi(A)|$. We choose one producing
support for each distinct value and call this choice an assigned
support map.}

The main contribution of this paper is a complete classification of
the support profiles of full-capacity matrices of order $3$. More
precisely, for every full-capacity matrix and every assigned support
map, we prove that the numbers of values assigned to supports of sizes
$1$, $2$, and $3$ are necessarily
$$
    (1,5,3),\qquad (2,4,3),\qquad\text{or}\qquad (2,5,2).
$$
{Among the other aggregate profiles compatible with the trivial
supportwise bounds, we show that}
$$
    (0,6,3),\quad (1,6,2),\quad (2,6,1),\quad
    (3,3,3),\quad (3,4,2),\quad (3,5,1),
    \mbox{ and } (3,6,0)
$$
are impossible.

Moreover, up to a simultaneous permutation of the indices, the
corresponding detailed profiles are
$$
    (1,0,0;1,2,2;3),\qquad
    (1,1,0;1,2,1;3),\qquad
    (1,1,0;1,2,2;2).
$$
Thus, although the equality $c_3=9$ is already known, our result gives, to the best of our knowledge, the first complete
classification of how the $9$ distinct Pareto eigenvalues of a
full-capacity matrix can be distributed among their producing supports.

The proof combines bounds inherited from the order $n=2$ problem with
three structural restrictions. First, the graph formed by the rich
pair supports is triangle-free. Second, if a Pareto eigenvalue is
assigned to each of the three singleton supports, then at most one
eigenvalue can be assigned to the full support. Third, suppose that
Pareto eigenvalues are assigned to two singleton supports. If the two
pair supports that connect these singletons to the remaining index are
rich, then at most two eigenvalues can be assigned to the full support.
These arguments also provide a short alternative proof of the equality
$c_3=9$. However, this proof is only a consequence of the profile
classification.

We show that the classification is sharp by giving exact regular
realizations of all $3$ profiles. For the profile $(2,5,2)$, we use
the order $n=3$ case of the family introduced in
\cite[Eq.~(2.5)]{SeegerVicente2011}; the new point is the determination
of its assigned profile and the proof that all $9$ assigned values are
regularly produced for every real parameter $s>1+\sqrt{2}$.
{Since $c_3^{\rm reg}=c_3=9$, these matrices maximize both the
Pareto capacity and the regular Pareto capacity. However, this equality
alone does not distinguish the possible support profiles. Using the
supportwise lower-stability result of
\cite[Proposition~3]{BaillonSeeger2020}, we show that each regular
realization belongs to a nonempty Euclidean-open set with the same
assigned profile. We then define an explicit nonempty Zariski-open set
on which the spectra of distinct principal submatrices are disjoint.
This set is Euclidean-open and dense, and its matrices are spectrally
separable. By intersecting the profile open sets with this separation
set, we prove that each of the $3$ profiles occurs intrinsically on a
nonempty Euclidean-open set. The resulting $3$ intrinsic profile sets
are pairwise disjoint.}

Section~\ref{sec:definitions} gives the definitions and basic facts.
The restrictions used in the proof are established in
Section~\ref{sec:structure}. Sections~\ref{sec:capacity} and
\ref{sec:classification} contain the proof of $c_3=9$ and the profile
classification. The examples are given in
Section~\ref{sec:realizations}.
{Open regular realizations, the Zariski-open separation condition,
and intrinsic profile sets are studied in
Section~\ref{sec:stability}.}

\section{Supports, assignments, and regularity}
\label{sec:definitions}

For $n\geq1$, set
$$
    \Ind_n:=\{1,\ldots,n\},
    \qquad
    \Supp_n:=\{J\subseteq\Ind_n:J\neq\varnothing\}.
$$
For $J\in\Supp_n$, let $A_J$ be the principal submatrix of $A$ indexed
by $J$. We also write $J^{\mathrm c}:=\Ind_n\setminus J$ and denote by
$A_{J^{\mathrm c},J}$ the submatrix with rows in $J^{\mathrm c}$ and
columns in $J$.

A support $J$ \emph{produces} $\lambda$ if there is a vector
$u\in\R^{|J|}$ such that
\begin{equation}\label{eq:support-criterion}
    A_Ju=\lambda u,
    \qquad
    u>0,
    \qquad
    A_{J^{\mathrm c},J}u\geq0.
\end{equation}
Let $\Pi_J(A)$ be the set of values produced by $J$, and set
\begin{equation}\label{NumberPJ}
    p_J(A):=|\Pi_J(A)|.
\end{equation}
The support criterion gives
\begin{equation}\label{eq:support-union}
    \Pi(A)=\bigcup_{J\in\Supp_n}\Pi_J(A);
\end{equation}
see \cite[Theorem~4.1]{Seeger1999} and \cite[Section~2]{BaillonSeeger2020}. The same value may occur in several
sets $\Pi_J(A)$.

We first introduce the supportwise notion of regularity used throughout
the paper.
\begin{definition}\label{def:regular}
A value $\lambda\in\Pi_J(A)$ is \emph{regularly produced by $J$} if it
is an algebraically simple eigenvalue of $A_J$ and every component of
$A_{J^{\mathrm c},J}u$ in \eqref{eq:support-criterion} is positive.
For $J=\Ind_n$, the second condition is empty. A Pareto eigenvalue is
called \emph{regular} if it is regularly produced by at least one
support.
\end{definition}

{The notion of a regular Pareto eigenvalue was introduced by Baillon
and Seeger in \cite[Definition~1(iii)]{BaillonSeeger2020}. The
terminology ``regularly produced by $J$'' used here is the supportwise
version of that definition. It records explicitly that algebraic
simplicity and strictness hold with respect to the same producing
support $J$.}

We now introduce the assignment used to count each distinct Pareto
eigenvalue only once.

\begin{definition}\label{def:assignment}
Let $A\in\R^{n\times n}$. An \emph{assigned support map} is a map
$$
    \tau:\Pi(A)\longrightarrow\Supp_n
$$
such that $\lambda\in\Pi_{\tau(\lambda)}(A)$ for every
$\lambda\in\Pi(A)$. For $J\in\Supp_n$, define
$$
    m_J(A,\tau)
    :=\bigl|\{\lambda\in\Pi(A):\tau(\lambda)=J\}\bigr|.
$$
For $1\leq k\leq n$, define
$$
    S_k(A,\tau):=
    \sum_{\substack{J\in\Supp_n\\ |J|=k}}m_J(A,\tau).
$$
\end{definition}

Such a map exists by \eqref{eq:support-union}. {The sets
$$
  \{\lambda\in\Pi(A):\tau(\lambda)=J\},
  \qquad J\in\mathcal \Supp_n,
$$
form a partition of $\Pi(A)$. Thus the numbers $m_J(A,\tau)$ count each
distinct Pareto eigenvalue exactly once, whereas the numbers $p_J(A)$
may count the same value for several supports.}

The assigned profile may depend on the chosen support map, as the next
example shows.
\begin{remark}\label{rem:assignment-example}
Let $A=\Id_3$, the identity matrix of order $n=3$. Every nonempty support $J\subseteq\Ind_3$ produces the
value $1$, because $A_Ju=u$ for every $u>0$ and the outside slack is
zero. Thus $p_J(A)=1$ for all seven supports, but $\Pi(A)=\{1\}$. The
value $1$ may be assigned to any support. Assigning it to $\{1\}$
gives the detailed profile $(1,0,0;0,0,0;0)$, while assigning it to
$\Ind_3$ gives $(0,0,0;0,0,0;1)$. Hence the assigned profile may
depend on $\tau$.
\end{remark}

Unless stated otherwise, a result about an assigned profile is meant
to hold for every assigned support map. In
Proposition~\ref{prop:generic-uniqueness}, we shall give a generic set
on which each value has only one producing support. On that set the
profile does not depend on a choice.

By definition,
\begin{equation}\label{eq:profile-sum}
    |\Pi(A)|=\sum_{k=1}^nS_k(A,\tau).
\end{equation}
Since the values assigned to $J$ are distinct eigenvalues of $A_J$,
\begin{equation}\label{eq:support-count-bound}
    0\leq m_J(A,\tau)\leq p_J(A)\leq |J|.
\end{equation}

For $n=3$, we use the notation
$$
    m_i:=m_{\{i\}}(A,\tau),
    \qquad
    m_{ij}:=m_{\{i,j\}}(A,\tau),
    \qquad
    m_{123}:=m_{\Ind_3}(A,\tau).
$$
The \emph{detailed assigned profile} is
$$
    (m_1,m_2,m_3;m_{12},m_{13},m_{23};m_{123}),
$$
and the \emph{aggregate assigned profile} is $(S_1,S_2,S_3)$, with
{$$
    S_1=m_1+m_2+m_3,\;
    S_2=m_{12}+m_{13}+m_{23}, \mbox{ and }
    S_3=m_{123}.
$$
}

The next proposition describes several transformations that preserve
the producing supports.
\begin{proposition}
\label{prop:invariance}
Let $P$ be a permutation matrix, let $D$ be a positive diagonal
matrix, let $\alpha>0$, and let $\beta\in\R$. If
$$
    B=\alpha D^{-1}P^\top APD+\beta\Id_n,
$$
then
$$
    \Pi(B)=\alpha\Pi(A)+\beta.
$$
The producing supports are relabelled by the permutation associated
with $P$, and regular production is preserved.
\end{proposition}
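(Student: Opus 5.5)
The plan is to work supportwise through the criterion \eqref{eq:support-criterion} and use \eqref{eq:support-union} to pass to $\Pi$. Let $\sigma$ be the permutation with $Pe_j=e_{\sigma(j)}$, so that $(P^\top AP)_{ij}=a_{\sigma(i)\sigma(j)}$. Write $D=\diag(d_1,\dots,d_n)$ with $d_i>0$. Then
$$
    b_{ij}=\alpha\, d_i^{-1}a_{\sigma(i)\sigma(j)}d_j+\beta\delta_{ij}.
$$
For a support $J$, set $\sigma(J):=\{\sigma(j):j\in J\}$. Since $\sigma$ is a bijection, $\sigma(J^{\mathrm c})=\sigma(J)^{\mathrm c}$. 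The central claim is that $J$ produces $\mu$ for $B$ if and only if $\sigma(J)$ produces $\lambda=(\mu-\beta)/\alpha$ for $A$. Granting this, \eqref{eq:support-union} gives $\Pi(B)=\alpha\Pi(A)+\beta$, and $\Pi_J(B)=\alpha\Pi_{\sigma(J)}(A)+\beta$ is exactly the relabelling of producing supports.

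To prove the claim, take $u>0$ in $\R^{|J|}$ indexed by $J$. Define $v$ indexed by $\sigma(J)$ by $v_{\sigma(j)}=d_ju_j$; clearly $v>0$ if and only if $u>0$. For $i\in J$ we compute
$$
    (B_Ju)_i=\alpha d_i^{-1}(A_{\sigma(J)}v)_{\sigma(i)}+\beta u_i .
$$
Since $\alpha>0$ and $d_i>0$, the equation $B_Ju=\mu u$ is equivalent to $A_{\sigma(J)}v=\lambda v$. For $i\in J^{\mathrm c}$ there is no diagonal contribution, so
$$
    (B_{J^{\mathrm c},J}u)_i=\alpha d_i^{-1}(A_{\sigma(J)^{\mathrm c},\sigma(J)}v)_{\sigma(i)}.
$$
Because $\alpha d_i^{-1}>0$, each slack component of $B$ is a positive multiple of the corresponding slack component of $A$. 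This preserves both nonnegativity and strict positivity. The map $u\mapsto v$ is a bijection, so the equivalence holds in both directions.

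For regularity, note that $B_J=\alpha D_J^{-1}Q^\top A_{\sigma(J)}QD_J+\beta\Id$, where $Q$ is a permutation matrix. Hence $B_J$ is similar to the affine image $\alpha A_{\sigma(J)}+\beta\Id$, and the characteristic polynomials correspond under $\mu=\alpha\lambda+\beta$. Algebraic multiplicities are therefore preserved, so $\mu$ is algebraically simple for $B_J$ exactly when $\lambda$ is simple for $A_{\sigma(J)}$. Combined with the positive rescaling of the slack, regular production by $J$ corresponds to regular production by $\sigma(J)$.

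The only real obstacle is index bookkeeping: getting the direction of $\sigma$ right, and seeing that $\beta\Id_n$ affects only diagonal entries, so it does not contribute to the off-support rows. Everything else is routine.
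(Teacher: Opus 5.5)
Your proof is correct and uses the same mechanism as the paper's. Both work through the supportwise criterion, rescale the eigenvector by $D$, note that each outside slack is multiplied by a positive factor, and use that $B_J$ is similar to an affine image of the corresponding principal submatrix of $A$, which preserves algebraic multiplicity. The difference is only organizational: you handle the composite map in one index computation and get both inclusions from the bijection $u\mapsto v$, whereas the paper treats permutation, diagonal similarity, and the affine change separately and gets the reverse inclusion from the inverse transformations.
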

\begin{proof}
We consider the transformations separately. Permutation similarity only
relabels the coordinates and hence the producing supports. It also
preserves the algebraic multiplicities of the eigenvalues of the
corresponding principal submatrices.

Suppose next that
$$
    B=D^{-1}AD,
$$
and that a support $J$ produces $\lambda$ for $A$ through a vector
$u>0$. Since
$$
    B_J=D_J^{-1}A_JD_J,
    \qquad
    B_{J^{\mathrm c},J}
    =D_{J^{\mathrm c}}^{-1}A_{J^{\mathrm c},J}D_J,
$$
the vector
$$
    v=D_J^{-1}u>0
$$
satisfies
$$
    B_Jv=\lambda v,
    \qquad
    B_{J^{\mathrm c},J}v
    =D_{J^{\mathrm c}}^{-1}A_{J^{\mathrm c},J}u\geq0.
$$
Thus $J$ produces $\lambda$ for $B$. Since $D_{J^{\mathrm c}}$ is
positive diagonal, strict outside inequalities are also preserved.
Moreover, $B_J$ is similar to $A_J$, so algebraic simplicity is
preserved.

Finally, suppose that
$$
    B=\alpha A+\beta\Id_n,
    \qquad \alpha>0.
$$
If $J$ produces $\lambda$ for $A$ through $u>0$, then
$$
    B_Ju=(\alpha\lambda+\beta)u,
    \qquad
    B_{J^{\mathrm c},J}u
    =\alpha A_{J^{\mathrm c},J}u\geq0.
$$
Hence $J$ produces $\alpha\lambda+\beta$ for $B$. Strict inequalities
are preserved because $\alpha>0$, and algebraic simplicity is
preserved under the affine transformation
$\lambda\mapsto\alpha\lambda+\beta$.\\
Applying the inverse permutation, the inverse positive diagonal
similarity, and the inverse affine transformation gives the reverse
inclusion. Therefore,
$$
    \Pi(B)=\alpha\Pi(A)+\beta,
$$
with the producing supports relabelled by the permutation associated
with $P$, and regular production is preserved.
\end{proof}
\begin{remark}
The spectral identities associated with permutation similarity,
positive scaling, and scalar translation appear in
\cite[Proposition~2]{PintoDaCostaSeeger2010}. Proposition~\ref{prop:invariance}
also treats positive diagonal similarity and describes the effect of
these transformations at the level of producing supports. In particular,
permutation similarity relabels the supports, while positive diagonal
similarity, positive scaling, and scalar translation preserve the
support. The proposition also shows that regular production is preserved.
\end{remark}
The equality $c_2=3$ is known; see \cite[Proposition~16]{QueirozJudiceHumes2004} and
\cite[Proposition~9]{PintoDaCostaSeeger2010} . We include a short proof
because the same calculation will be used below.

\begin{proposition}
\label{prop:order-two-profiles}
The Pareto capacity in order $2$ is
$$
    c_2=3.
$$
Let $A\in\R^{2\times2}$ be full-capacity, and let $\tau$ be an
assigned support map. Up to an interchange of the indices, the detailed
assigned profile is either
$$
    (1,1;1)
    \qquad\text{or}\qquad
    (1,0;2).
$$
The corresponding aggregate profiles are
$$
    (2,1)
    \qquad\text{and}\qquad
    (1,2),
$$
respectively. In both cases, every Pareto eigenvalue has a unique
producing support and is regularly produced. Both profiles occur.
\end{proposition}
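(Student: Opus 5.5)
We work directly with the support criterion \eqref{eq:support-criterion} and the union \eqref{eq:support-union}. For $n=2$ the supports are $\{1\}$, $\{2\}$ and $\{1,2\}$. By \eqref{eq:support-count-bound}, $|\Pi(A)|\le 1+1+2=4$. Write $A=\begin{pmatrix}a&b\\ c&d\end{pmatrix}$. The singleton $\{1\}$ produces $a$ if and only if $c\ge0$, and $\{2\}$ produces $d$ if and only if $b\ge0$. The pair $\{1,2\}$ produces $\lambda$ if and only if $\lambda$ is an eigenvalue of $A$ with a positive eigenvector. For $\lambda\ne a$, the eigenvector is proportional to $(b,\lambda-a)$; for $\lambda\ne d$, it is proportional to $(\lambda-d,c)$.

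\emph{The bound $c_2\le3$.} Suppose $|\Pi(A)|=4$. Then both singletons contribute, so $b,c\ge0$. The pair must also produce two distinct eigenvalues $\lambda_1\ne\lambda_2$, both different from $a$ and $d$. Positivity of $(b,\lambda_i-a)$ then forces $b>0$ and $\lambda_i>a$ for $i=1,2$. Likewise, positivity of $(\lambda_i-d,c)$ forces $\lambda_i>d$. Hence $\lambda_1+\lambda_2>a+d$, which contradicts $\operatorname{tr}A=a+d$. (Equivalently, a matrix with $b,c>0$ is irreducible and nonnegative up to a shift, so by Perron--Frobenius only one eigenvalue has a positive eigenvector.) Therefore $|\Pi(A)|\le3$. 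The example $\begin{pmatrix}0&1\\1&0\end{pmatrix}$ realizes profile $(1,1;1)$ with spectrum $\{0,1\}$... but this gives only two values, since $a=d$. So we choose instead $A=\begin{pmatrix}0&1\\1&3\end{pmatrix}$. Here $\Pi(A)$ contains $0$, $3$ and the Perron root, three distinct values, which shows $c_2=3$.

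\emph{Classification of full-capacity profiles.} Let $|\Pi(A)|=3$. Since $S_1\le2$ and $S_2\le2$, the possible aggregate profiles are $(2,1)$ and $(1,2)$. In case $(2,1)$, we have $b,c\ge0$ and $a\ne d$, and the pair contributes a third value. In case $(1,2)$, up to interchange $m_1=1$ and the pair produces two values. The argument above shows that not both $b,c>0$ can hold. Combined with the requirement that both $(b,\lambda_i-a)$ and $(\lambda_i-d,c)$ be positive, this should force $c>0$ and $b<0$. Then $d$ is not produced, giving the profile $(1,0;2)$.

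\emph{Uniqueness and regularity.} The step I expect to require the most care is checking that every value has a unique producing support and is regularly produced, including the boundary cases $b=0$ or $c=0$. If $c=0$, then $A$ is triangular and the pair can produce only $a$ or $d$, so the pair value coincides with a singleton value and we lose one value. A similar degeneracy occurs if $b=0$ in case $(2,1)$. So full capacity forces strict inequalities. We then verify simplicity and strictness in each case:
\begin{itemize}[label={}]
\item Distinct pair eigenvalues are simple.
\item A pair eigenvalue equal to $a$ would force $b=0$ or a degenerate eigenvector.
\item The outside slack $c$ (respectively $b$) is positive.
\end{itemize}
Since the three values are distinct and each support produces a disjoint set of values, the producing support of each value is unique.

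\emph{Realizations.} Both profiles occur. The example $\begin{pmatrix}0&1\\1&3\end{pmatrix}$ gives $(1,1;1)$. A matrix such as $\begin{pmatrix}0&-1\\1&-3\end{pmatrix}$, suitably adjusted so that the pair has two eigenvalues with positive eigenvectors, should give $(1,0;2)$. We will choose explicit entries and verify both positive eigenvectors directly.
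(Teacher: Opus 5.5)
Your proof of $c_2\le3$ is correct but takes a different route from the paper. You show that when $b,c\ge0$, every value produced by $\{1,2\}$ exceeds both $a$ and $d$, so two such values would sum to more than $\operatorname{tr}A$. The paper instead writes a positive eigenvector as $(1,t)^\top$, notes that $t$ solves $bt^2+(a-d)t-c=0$, and reads off $bc<0$ from the product $-c/b$ of two positive roots whenever $\{1,2\}$ produces two values. The paper's route gives more, namely the sign pattern of a rich pair, and that is exactly what your classification is missing.

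\textbf{The gap: case $(1,2)$.} There the detailed profile $(1,0;2)$ already follows from $S_1=1$. However, the uniqueness and regularity claims still need three facts:
\begin{itemize}
\item $c>0$, for the strict slack of $\{1\}$;
\item $b<0$, to see that $\{2\}$ produces nothing;
\item $bc\ne0$, to see that $a$ is not an eigenvalue of $A$.
\end{itemize}
Your justification, ``the requirement that both $(b,\lambda_i-a)$ and $(\lambda_i-d,c)$ be positive'', is wrong. An eigenvector need only be a nonzero multiple of such a vector, and in this profile $b<0$, so $(b,\lambda_i-a)$ is negative. For the paper's matrix $\begin{pmatrix}3&-1\\2&0\end{pmatrix}$ these vectors are $(-1,-2)$ and $(-1,-1)$. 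Read literally, your requirement forces $b,c>0$, which you had just excluded, so the case would be empty.

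\textbf{A repair in your own terms.} If $bc=0$, then $A$ is triangular with spectrum $\{a,d\}$. So at most one value produced by $\{1,2\}$ differs from the value $a$ assigned to $\{1\}$, contradicting $m_{12}=2$. Hence $bc\neq0$, so $c>0$ (as $c\ge0$). Your trace argument then rules out $b>0$, leaving $b<0$.

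\textbf{Two smaller points.}
\begin{itemize}
\item In case $(2,1)$, your remark ``distinct pair eigenvalues are simple'' does not cover the single value produced by $\{1,2\}$. Its simplicity follows from $(a-d)^2+4bc>0$, since $b,c>0$.
\item Your matrix $\begin{pmatrix}0&-1\\1&-3\end{pmatrix}$ needs no adjustment. The slopes of eigenvectors $(1,t)^\top$ solve $t^2-3t+1=0$, and both roots are positive. Since $c=1>0>b$, it has Pareto spectrum $\{0,(-3\pm\sqrt5)/2\}$ with profile $(1,0;2)$.
\end{itemize}
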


\begin{proof}
Let $
    A=
    \begin{pmatrix}
        a&b\\
        c&d
    \end{pmatrix}.
$
Each singleton support produces at most $1$ value, and the full support
produces at most $2$ values.

Suppose first that the full support produces $2$ distinct values. A
positive eigenvector can be written as $(1,t)^\top$ with $t>0$.
Eliminating the eigenvalue gives
$$
    bt^2+(a-d)t-c=0.
$$
The $2$ produced values give $2$ distinct positive roots. Their product
is $-c/b>0$, so
$$
    bc<0.
$$
The support $\{1\}$ can produce only if $c\geq0$, while the support
$\{2\}$ can produce only if $b\geq0$. Hence at most $1$ singleton
support produces. It follows that
$$
    |\Pi(A)|\leq3.
$$

If the full support produces at most $1$ value, then the same bound
follows from the fact that the $2$ singleton supports produce at most
$1$ value each. Therefore
$$
    c_2\leq3.
$$
The matrix
$
    \begin{pmatrix}
        3&-1\\
        2&0
    \end{pmatrix}
$
has the $3$ distinct Pareto eigenvalues $1$, $2$, and $3$. Hence
$c_2=3$.

Now let $A$ be full-capacity. Suppose that the full support produces
$2$ values. The preceding argument gives $bc<0$. Exactly $1$ of the
singleton supports then produces. Up to an interchange of the indices,
the detailed profile is
$$
    (1,0;2).
$$
Moreover, $bc\neq0$, so neither $a$ nor $d$ is an eigenvalue of $A$.
Thus no value is produced by both a singleton support and the full
support.

Suppose next that the full support produces at most $1$ value. Full
capacity implies that it produces exactly $1$ value and that both
singleton supports produce. Hence
$$
    b\geq0,
    \qquad
    c\geq0.
$$
In fact, $bc>0$. Indeed, if $bc=0$, then the eigenvalues of $A$ are
$a$ and $d$, and the full support cannot produce a third distinct
value. Therefore the detailed profile is
$$
    (1,1;1).
$$
Full capacity also gives $a\neq d$. The value produced by the full
support is different from $a$ and $d$. Hence each of the $3$ Pareto
eigenvalues has a unique producing support.

In the profile $(1,0;2)$, the outside slack of the producing singleton
is strictly positive, and the $2$ full-support eigenvalues are distinct.
In the profile $(1,1;1)$, we have $b,c>0$, so both singleton
productions are strict. Also,
$$
    (a-d)^2+4bc>0,
$$
so the value produced by the full support is a simple eigenvalue.
Thus all $3$ values are regularly produced.

Finally,
$$
    \begin{pmatrix}
        3&-1\\
        2&0
    \end{pmatrix}
    \qquad\text{and}\qquad
    \begin{pmatrix}
        0&1\\
        1&2
    \end{pmatrix}
$$
realize the profiles $(1,0;2)$ and $(1,1;1)$, respectively.
\end{proof}
\begin{remark}
The complete Pareto spectra of real matrices of order $2$ were
described in \cite[Example~2 and Tables~1 and~2]
{PintoDaCostaSeeger2010}. The same paper introduced the spectral
histogram and its aggregated version in
\cite[Definition~3]{PintoDaCostaSeeger2010}. The proposition above
records the full-capacity case in the assigned-profile terminology used
here. In order $2$, every Pareto eigenvalue of a full-capacity matrix
has a unique producing support. Hence no assignment ambiguity occurs.
\end{remark}

\section{Rich pair supports and order-three restrictions}
\label{sec:structure}

A pair support is called \emph{rich} if it produces two distinct
Pareto eigenvalues. The next lemma gives a hereditary bound from the order-$2$ case.

\begin{lemma}\label{lem:heredity-two}
Let $A\in\R^{3\times3}$ and let $\tau$ be an assigned support map. For
every pair $\{i,j\}\subseteq\Ind_3$, we have
\begin{equation}\label{eq:pair-heredity}
    m_i+m_j+m_{ij}\leq3.
\end{equation}
Therefore,
\begin{equation}\label{eq:2S1S2}
    2S_1+S_2\leq9.
\end{equation}
\end{lemma}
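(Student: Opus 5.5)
Fix a pair $\{i,j\}\subseteq\Ind_3$ and let $k$ be the remaining index. The plan is to compare the supports $\{i\}$, $\{j\}$, $\{i,j\}$ of $A$ with the corresponding supports of the $2\times2$ principal submatrix $B:=A_{\{i,j\}}$, and then invoke Proposition~\ref{prop:order-two-profiles}. Indeed, if $J\subseteq\{i,j\}$ produces $\lambda$ for $A$ through $u>0$, then $A_Ju=\lambda u$ and $A_{J^{\mathrm c},J}u\geq0$. The rows of $A_{J^{\mathrm c},J}$ indexed by $\{i,j\}\setminus J$ are exactly the rows of $B_{(\{i,j\}\setminus J),J}$. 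Hence $J$, viewed as a support of $B$, produces $\lambda$ for $B$; one simply drops the constraint coming from row $k$. Thus $\Pi_J(A)\subseteq\Pi_J(B)$ for each of the three supports $J\in\{\{i\},\{j\},\{i,j\}\}$.

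Next, the values $\lambda\in\Pi(A)$ with $\tau(\lambda)\in\{\{i\},\{j\},\{i,j\}\}$ are pairwise distinct, since $\tau$ is a map on $\Pi(A)$. Each of them belongs to $\Pi_{\tau(\lambda)}(A)\subseteq\Pi_{\tau(\lambda)}(B)\subseteq\Pi(B)$, where the last inclusion is \eqref{eq:support-union} for $B$. Therefore
$$
    m_i+m_j+m_{ij}\leq|\Pi(B)|\leq c_2=3,
$$
by Proposition~\ref{prop:order-two-profiles}. This gives \eqref{eq:pair-heredity}.

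For \eqref{eq:2S1S2}, sum \eqref{eq:pair-heredity} over the three pairs $\{1,2\}$, $\{1,3\}$, $\{2,3\}$. Each singleton index belongs to exactly two pairs, and each pair term appears once, so the left side equals $2(m_1+m_2+m_3)+(m_{12}+m_{13}+m_{23})=2S_1+S_2$, and the right side equals $9$.

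The only point needing care is the inclusion $\Pi_J(A)\subseteq\Pi_J(B)$, in particular checking that the outside-sign condition for $B$ is a sub-collection of the one for $A$. Once that is verified the rest is bookkeeping, so there is no real obstacle here.
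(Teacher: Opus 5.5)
Your proof is correct and follows essentially the same argument as the paper. You restrict to the principal submatrix $A_{\{i,j\}}$, note that this only drops the outside-slack inequality from row $k$, so the distinct values assigned to $\{i\}$, $\{j\}$, $\{i,j\}$ lie in $\Pi(A_{\{i,j\}})$ and number at most $c_2=3$, and then sum over the three pairs, with the inclusion $\Pi_J(A)\subseteq\Pi_J(A_{\{i,j\}})$ spelled out a bit more explicitly than in the paper.
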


\begin{proof}
Every value assigned to $\{i\}$, $\{j\}$, or $\{i,j\}$ is a Pareto
eigenvalue of the principal submatrix $A_{\{i,j\}}$. Indeed, passing to
this principal submatrix only removes outside slack inequalities. Thus
\eqref{eq:pair-heredity} follows from Proposition~\ref{prop:order-two-profiles}.
Summing it over the $3$ pairs gives \eqref{eq:2S1S2}.
\end{proof}
The following lemma gives the sign pattern imposed by a rich pair support.
\begin{lemma}\label{lem:rich-pair}
If the pair support $\{i,j\}$ is rich, then
\begin{equation}\label{eq:rich-sign}
    a_{ij}a_{ji}<0\quad \mbox{ and }\quad
    \frac{a_{jj}-a_{ii}}{a_{ij}}>0.
\end{equation}
In particular, if $a_{ii}<a_{jj}$, then $a_{ij}>0$ and $a_{ji}<0$.
\end{lemma}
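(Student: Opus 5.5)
The plan is to reduce to the computation already carried out in the proof of Proposition~\ref{prop:order-two-profiles}. If $\{i,j\}$ is rich, it produces two distinct values $\lambda_1\neq\lambda_2$. By the support criterion \eqref{eq:support-criterion}, each $\lambda_k$ has an eigenvector $u^{(k)}>0$ of the principal submatrix $A_{\{i,j\}}$. The outside condition on the third row is irrelevant for the sign pattern, so we simply discard it. After relabelling so that $i$ plays the role of the first index (Proposition~\ref{prop:invariance} with a permutation), write
$$A_{\{i,j\}}=\begin{pmatrix} a&b\\ c&d\end{pmatrix},$$
with $a=a_{ii}$, $b=a_{ij}$, $c=a_{ji}$, $d=a_{jj}$.

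Normalize each positive eigenvector as $(1,t_k)^\top$ with $t_k>0$. The first row gives $\lambda_k=a+bt_k$, and the second gives $c+dt_k=\lambda_k t_k$. Eliminating $\lambda_k$ yields
$$bt_k^2+(a-d)t_k-c=0 .$$
Since $\lambda_1\neq\lambda_2$ and $\lambda_k=a+bt_k$, we need $b\neq0$ (otherwise both values equal $a$). We also get $t_1\neq t_2$. Hence the quadratic genuinely has degree two and has the two distinct positive roots $t_1,t_2$.

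Vieta's formulas then finish the argument. The product of the roots is $t_1t_2=-c/b>0$, which gives $bc<0$, that is, $a_{ij}a_{ji}<0$. The sum is $t_1+t_2=(d-a)/b>0$, which is exactly $(a_{jj}-a_{ii})/a_{ij}>0$. For the final claim, if $a_{ii}<a_{jj}$ the numerator is positive, so $a_{ij}>0$, and then $a_{ij}a_{ji}<0$ forces $a_{ji}<0$.

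There is no real obstacle here. The only point requiring care is justifying $b\neq0$ and that the two values give two distinct positive roots, so that Vieta applies to a genuine quadratic. Both follow from $\lambda_k=a+bt_k$, as noted above.
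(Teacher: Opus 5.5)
Your proof is correct and follows the paper's argument. You normalize the positive eigenvector as $(1,t)^\top$, eliminate the eigenvalue to obtain $a_{ij}t^2+(a_{ii}-a_{jj})t-a_{ji}=0$, and read off both sign conditions from Vieta's formulas for the two distinct positive roots. Your explicit check that $a_{ij}\neq0$ and $t_1\neq t_2$, via $\lambda_k=a_{ii}+a_{ij}t_k$, fills in a detail that the paper leaves implicit.
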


\begin{proof}
A positive eigenvector of $A_{\{i,j\}}$ can be written as
$(1,t)^\top$ with $t>0$. Eliminating the eigenvalue gives
$$
    a_{ij}t^2+(a_{ii}-a_{jj})t-a_{ji}=0.
$$
The two produced values give two distinct positive roots. Their product
and their sum are positive, which gives \eqref{eq:rich-sign}.
\end{proof}
We now show that rich pair supports cannot form a triangle.
\begin{theorem}
\label{thm:triangle-free}
Let $n\geq3$ and $A\in\R^{n\times n}$. Let $G_2(A)$ be the graph with
vertex set $\Ind_n$ whose edges are the rich pair supports. Then
$G_2(A)$ is triangle-free.
\end{theorem}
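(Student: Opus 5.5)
Suppose, for contradiction, that $G_2(A)$ contains a triangle on vertices $i,j,k$, so the three pair supports $\{i,j\}$, $\{i,k\}$ and $\{j,k\}$ are all rich. Richness of a pair support only involves the principal submatrix on that pair together with the outside slack inequalities. The sign conclusions of Lemma~\ref{lem:rich-pair}, however, depend only on the $2\times2$ principal submatrix. We will therefore use Lemma~\ref{lem:rich-pair} alone, and derive a contradiction from the diagonal entries $a_{ii},a_{jj},a_{kk}$ and the off-diagonal entries among these three indices.

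Applying Lemma~\ref{lem:rich-pair} to each pair first gives $a_{ij}a_{ji}<0$, so $a_{ij}\neq0$, and $(a_{jj}-a_{ii})/a_{ij}>0$. Hence $a_{ii}\neq a_{jj}$, and similarly for the other pairs, so the three diagonal entries $a_{ii},a_{jj},a_{kk}$ are pairwise distinct. After relabelling the triangle we may assume
$$
a_{ii}<a_{jj}<a_{kk}.
$$
The ``in particular'' part of Lemma~\ref{lem:rich-pair} then gives $a_{ij}>0$, $a_{ji}<0$, $a_{ik}>0$, $a_{ki}<0$, $a_{jk}>0$ and $a_{kj}<0$. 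In words, every entry above the diagonal in this ordering is positive and every entry below it is negative.

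We now use the outside slack conditions of the rich supports. Consider the rich support $\{i,k\}$, whose outside index is $j$. It produces two values with positive eigenvectors $u=(u_i,u_k)^\top>0$, and each must satisfy
$$
a_{ji}u_i+a_{jk}u_k\geq0 .
$$
Since $a_{ji}<0<a_{jk}$, this says $u_k/u_i\geq -a_{ji}/a_{jk}$. That is a lower bound on the ratio, not an immediate contradiction, so we must bring in the other two rich supports:
\begin{itemize}
\item For $\{i,j\}$ with outside index $k$, the condition is $a_{ki}u_i+a_{kj}u_j\geq0$. Both coefficients are negative and $u_i,u_j>0$, so the left side is strictly negative.
\end{itemize}
This is a contradiction: the support $\{i,j\}$ cannot produce any value at all, let alone two. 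So the case analysis closes immediately once the ordering of the diagonal is fixed. The key point is to choose the pair whose outside index is the one with the largest diagonal entry, since for that index both coefficients $a_{ki},a_{kj}$ are negative.

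The main step to get right is the ordering argument. We must check that Lemma~\ref{lem:rich-pair} indeed gives strict distinctness of the diagonal entries, and that the orientation convention of the lemma is applied consistently to each of the three pairs. Once the signs are settled, the slack inequality for the pair opposite the vertex with the largest diagonal entry yields the contradiction directly. Since everything happens inside the three indices $i,j,k$, the argument works for any $n\geq3$: the remaining outside rows impose further conditions but are never needed.
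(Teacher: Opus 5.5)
Your proof is correct and is essentially the paper's own argument. You use Lemma~\ref{lem:rich-pair} to get distinct diagonal entries, order them as $a_{ii}<a_{jj}<a_{kk}$, and observe that row $k$ gives a strictly negative outside slack for $\{i,j\}$, contradicting richness. The detour through the slack of $\{i,k\}$ is harmless but unnecessary and could be deleted.
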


\begin{proof}
Suppose that $\{i,j,k\}$ is a triangle in $G_2(A)$. By
Lemma~\ref{lem:rich-pair}, the $3$ diagonal entries are distinct.
Relabel the indices so that $
    a_{ii}<a_{jj}<a_{kk}.$
The sign rule then gives
$$
    a_{ij},a_{ik},a_{jk}>0 \quad \mbox{ and }\quad
    a_{ji},a_{ki},a_{kj}<0.
$$
For a positive vector $(u_i,u_j)^\top$ on $\{i,j\}$, the outside
slack in row $k$ is
$$
    a_{ki}u_i+a_{kj}u_j<0.
$$
Thus $\{i,j\}$ cannot produce a Pareto eigenvalue. This contradicts
the assumption that it is rich.
\end{proof}
The triangle-free property gives the following global bound on pair supports.
\begin{corollary}\label{cor:pair-bound}
Let $A\in\R^{n\times n}$ and let $\tau$ be an assigned support map.
Then
\begin{equation}\label{eq:pair-global}
    S_2(A,\tau)
    \leq \binom n2+\left\lfloor\frac{n^2}{4}\right\rfloor.
\end{equation}
In order $n=3$, we have $S_2\leq5$.
\end{corollary}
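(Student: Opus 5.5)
We bound $S_2$ by splitting the pair supports according to whether they are rich. For each pair $\{i,j\}$, the values assigned to $\{i,j\}$ are distinct elements of $\Pi_{\{i,j\}}(A)$. Hence, by \eqref{eq:support-count-bound},
$$
    m_{ij}\leq p_{\{i,j\}}(A)\leq 2 .
$$
Moreover, if $\{i,j\}$ is not rich, then $p_{\{i,j\}}(A)\leq 1$, so $m_{ij}\leq1$. Writing $E$ for the edge set of $G_2(A)$, we obtain
$$
    S_2(A,\tau)=\sum_{\{i,j\}}m_{ij}
    \leq \sum_{\{i,j\}}1+\sum_{\{i,j\}\in E}1
    =\binom n2+|E|.
$$

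It remains to bound $|E|$. By Theorem~\ref{thm:triangle-free}, $G_2(A)$ is a triangle-free simple graph on the $n$ vertices $\Ind_n$. Mantel's theorem (the triangle case of Tur\'an's theorem) gives $|E|\leq\lfloor n^2/4\rfloor$, which yields \eqref{eq:pair-global}.

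If we want the argument self-contained, we can include the standard proof of Mantel's theorem. For an edge $\{i,j\}$, triangle-freeness means the neighbourhoods of $i$ and $j$ are disjoint, so $d_i+d_j\leq n$. Summing over edges gives
$$
    \sum_i d_i^2\leq n|E|.
$$
Cauchy--Schwarz gives $\sum_i d_i^2\geq (2|E|)^2/n$. Combining the two, $|E|\leq n^2/4$, and integrality gives $|E|\leq\lfloor n^2/4\rfloor$.

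For $n=3$ the bound reads $3+2=5$. Concretely, a triangle-free graph on three vertices has at most two edges, so at most two pairs contribute $2$ and the remaining pair contributes at most $1$.

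There is no real obstacle here. The only point to check is that $m_{ij}\leq1$ for a non-rich pair. This follows because a non-rich pair produces at most one distinct value, so at most one value can be assigned to it.
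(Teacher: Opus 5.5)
Your proof is correct and follows the paper's argument essentially step for step: the bound $m_J\leq p_J(A)\leq 1+\mathbf{1}_{\{J\text{ is rich}\}}$ gives $S_2\leq\binom n2+|E(G_2(A))|$, and the Mantel estimate via $d_i+d_j\leq n$, $\sum_i d_i^2\leq n|E|$, and Cauchy--Schwarz finishes it. The only cosmetic gap is that dividing by $|E|$ requires $|E|>0$; the case $|E|=0$ is trivial, and the paper treats it separately.
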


\begin{proof}
For every pair support $J$,
$$
    m_J(A,\tau)\leq p_J(A)
    \leq1+\mathbf{1}_{\{J\text{ is rich}\}}.
$$
Hence
$$
    S_2(A,\tau)\leq\binom n2+|E(G_2(A))|.
$$
Write $e:=|E(G_2(A))|$, and let $d_i$ be the degree of vertex $i$.
By Theorem~\ref{thm:triangle-free}, the endpoints of each edge have
disjoint neighbor sets, so $d_i+d_j\leq n$ for every edge $\{i,j\}$.
Therefore
$$
    \sum_{i=1}^n d_i^2
    =\sum_{\{i,j\}\in E(G_2(A))}(d_i+d_j)
    \leq ne.
$$
Since $\sum_i d_i=2e$, the Cauchy--Schwarz inequality gives
$$
    4e^2\leq n\sum_{i=1}^n d_i^2\leq n^2e.
$$
If $e=0$, the conclusion is immediate. If $e>0$, dividing by $e$ and using the fact that $e$ is an integer, we obtain, $e\leq\lfloor n^2/4\rfloor$. The proff is thereby completed.
\end{proof}
The next lemma restricts the contribution of the full support when all singleton supports produce.
\begin{lemma}\label{lem:all-singletons}
Let $A\in\R^{3\times3}$. If all $3$ singleton supports produce
Pareto eigenvalues, then
$$
    |\Pi_{\Ind_3}(A)|\leq1.
$$
\end{lemma}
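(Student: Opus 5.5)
The plan is to translate the hypothesis into a sign condition on the off-diagonal entries and then apply a Perron--Frobenius argument. The singleton support $\{i\}$ produces a value exactly when, for $u=1>0$, the outside slack $A_{\{i\}^{\mathrm c},\{i\}}u$ is nonnegative, by \eqref{eq:support-criterion}. That slack is the column of off-diagonal entries $a_{ji}$, $j\neq i$. So if all three singleton supports produce, then $a_{ji}\geq0$ for all $j\neq i$, and $A$ is a Metzler (essentially nonnegative) matrix.

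Next, I reduce to the nonnegative case by a shift. Choose $\beta\geq0$ so that $B:=A+\beta\Id_3$ is entrywise nonnegative. By Proposition~\ref{prop:invariance}, the full support produces $\lambda$ for $A$ if and only if it produces $\lambda+\beta$ for $B$, so $|\Pi_{\Ind_3}(A)|=|\Pi_{\Ind_3}(B)|$. For the full support there are no outside rows, so producing $\mu$ for $B$ simply means $Bu=\mu u$ for some $u>0$.

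The key step is to show that a nonnegative matrix has at most one eigenvalue admitting a strictly positive eigenvector. By the Perron--Frobenius theorem for nonnegative matrices (the weak form, with no irreducibility needed), $B^\top$ has a nonzero eigenvector $y\geq0$ for the spectral radius: $y^\top B=\rho(B)\,y^\top$. If $Bu=\mu u$ with $u>0$, then
$$
    \rho(B)\,y^\top u=y^\top Bu=\mu\,y^\top u .
$$
Since $y\geq0$, $y\neq0$ and $u>0$, we have $y^\top u>0$, and therefore $\mu=\rho(B)$. Hence $\Pi_{\Ind_3}(B)\subseteq\{\rho(B)\}$, and so $|\Pi_{\Ind_3}(A)|\leq1$.

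I do not expect a genuine obstacle. The only point needing care is to invoke the reducible version of Perron--Frobenius, so that we obtain a nonnegative, possibly not strictly positive, left eigenvector. This suffices, because strict positivity of $u$ alone guarantees $y^\top u>0$. The argument also does not use $n=3$ in any essential way.
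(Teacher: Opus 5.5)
Your proof is correct. The reduction matches the paper's: production by each singleton forces $a_{ji}\geq0$ for $j\neq i$, and a diagonal shift then makes $B$ entrywise nonnegative. The final step is different.

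\begin{itemize}
\item You invoke the weak Perron--Frobenius theorem for $B^\top$ to obtain a nonzero left eigenvector $y\geq0$ for $\rho(B)$, and pair it with the positive eigenvector $u$.
\item The paper avoids Perron--Frobenius altogether. It rescales with $D=\diag(x_1,x_2,x_3)$, so that $C=D^{-1}BD\geq0$ has every row sum equal to $\mu$, and then sandwiches $\mu\leq\rho(C)\leq\|C\|_\infty=\mu$.
\end{itemize}

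The paper's route is more elementary: it only uses that the spectral radius is bounded by an induced norm. Yours relies on the existence of a nonnegative Perron vector for a possibly reducible nonnegative matrix, which is deeper, though standard. In exchange, your pairing identity $\rho(B)\,y^\top u=\mu\,y^\top u$ is a one-line argument. It also shows clearly why strict positivity of $u$ is exactly what is needed. As you note, neither argument uses $n=3$.
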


\begin{proof}
If all $3$ singleton supports produce, then every off-diagonal entry
of $A$ is nonnegative. Choose $\gamma>0$ such that
$B:=A+\gamma\Id_3$ is entrywise nonnegative. Let $x>0$ satisfy
$Ax=\lambda x$. Set
$$
    \mu:=\lambda+\gamma,
    \qquad
    D:=\diag(x_1,x_2,x_3),
    \qquad
    C:=D^{-1}BD.
$$
Then $C\geq0$ and $C\mathbf{1}=\mu\mathbf{1}$. Thus every row sum of
$C$ is $\mu$, so $\mu\geq0$ and $\|C\|_\infty=\mu$. Since $\mu$ is
an eigenvalue of $C$,
$$
    \mu\leq\rhoSp(C)\leq\|C\|_\infty=\mu.
$$
It follows that $\mu=\rhoSp(C)=\rhoSp(B)$. Therefore every value
produced by the full support is equal to $\rhoSp(B)-\gamma$, and there
is at most one such value.
\end{proof}
The next lemma describes the obstruction created by a singleton support adjacent to two rich pairs.
\begin{lemma}\label{lem:active-center}
Let $\{i,j,k\}=\Ind_3$. Suppose that $\{i\}$ produces a Pareto
eigenvalue and that $\{i,j\}$ and $\{i,k\}$ are rich. Then
$\{j,k\}$ produces no Pareto eigenvalue.
\end{lemma}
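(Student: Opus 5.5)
The plan is a direct sign argument: the hypotheses force both entries $a_{ij}$ and $a_{ik}$ in row $i$ to be strictly negative. The outside-slack condition for $\{j,k\}$ lives in exactly that row, so it cannot hold.

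\emph{Step 1: the singleton $\{i\}$.} By the support criterion \eqref{eq:support-criterion} with $J=\{i\}$ and $u>0$ a scalar, the outside condition $A_{J^{\mathrm c},J}u\geq0$ reads $a_{ji}u\geq0$ and $a_{ki}u\geq0$. Hence $a_{ji}\geq0$ and $a_{ki}\geq0$.

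\emph{Step 2: the two rich pairs.} Since $\{i,j\}$ is rich, Lemma~\ref{lem:rich-pair} gives $a_{ij}a_{ji}<0$. In particular $a_{ji}\neq0$, so Step~1 upgrades to $a_{ji}>0$, and then $a_{ij}<0$. Applying the same lemma to the rich pair $\{i,k\}$ gives $a_{ki}>0$ and $a_{ik}<0$. (The second condition in \eqref{eq:rich-sign} would also give $a_{jj},a_{kk}<a_{ii}$, but this is not needed.)

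\emph{Step 3: conclusion.} Suppose $\{j,k\}$ produced some $\lambda$ through a vector $v=(v_j,v_k)^\top>0$. The only outside row is row $i$, and its slack is
$$
    a_{ij}v_j+a_{ik}v_k<0,
$$
because both coefficients are negative and both components of $v$ are positive. This violates $A_{J^{\mathrm c},J}v\geq0$, so $\{j,k\}$ produces no Pareto eigenvalue.

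There is no real obstacle here. The only point needing care is Step~2: the weak inequality $a_{ji}\geq0$ coming from the singleton must be upgraded to a strict one using $a_{ij}a_{ji}<0$. That strictness is what forces $a_{ij}<0$ rather than merely $a_{ij}\leq0$. With only $a_{ij}\leq0$, a zero coefficient could allow zero slack and the contradiction would fail.
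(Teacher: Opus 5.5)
Your proof is correct and follows the paper's argument step for step. Singleton production gives $a_{ji},a_{ki}\geq0$. Lemma~\ref{lem:rich-pair} then makes these strictly positive and forces $a_{ij},a_{ik}<0$, so the row-$i$ slack of $\{j,k\}$ is negative. You also spell out the upgrade from $a_{ji}\geq0$ to $a_{ji}>0$ via $a_{ij}a_{ji}<0$, which the paper leaves implicit.
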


\begin{proof}
After relabeling, take $i=1$, $j=2$, and $k=3$. Production by $\{1\}$
gives $a_{21},a_{31}\geq0$. Since $\{1,2\}$ is rich,
Lemma~\ref{lem:rich-pair} gives $a_{21}>0$ and $a_{12}<0$. In the same
way, $a_{31}>0$ and $a_{13}<0$. For every
$(u_2,u_3)^\top>0$, the outside slack of $\{2,3\}$ in row $1$ is
$$
    a_{12}u_2+a_{13}u_3<0.
$$
Hence $\{2,3\}$ produces no Pareto eigenvalue.
\end{proof}
The next lemma bounds the contribution of the full support when two singleton supports and two adjacent pair supports produce.
\begin{lemma}\label{lem:interval-obstruction}
Suppose that the singleton supports $\{1\}$ and $\{2\}$ produce
Pareto eigenvalues and that the pair supports $\{1,3\}$ and
$\{2,3\}$ are rich. Then
$$
    |\Pi_{\Ind_3}(A)|\leq2.
$$
\end{lemma}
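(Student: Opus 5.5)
The plan is to translate the hypotheses into a sign pattern for $A$. I will then show, via a Schur-complement factorization of the characteristic polynomial, that the cubic $\det(\lambda\Id_3-A)$ always has a real root which the full support cannot produce. Since the full support produces only eigenvalues of $A$, this leaves room for at most two values.

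\emph{Step 1 (sign pattern).} Production by $\{1\}$ gives $a_{21},a_{31}\geq0$, and production by $\{2\}$ gives $a_{12},a_{32}\geq0$. Richness of $\{1,3\}$ gives $a_{13}a_{31}<0$ by Lemma~\ref{lem:rich-pair}. Combined with $a_{31}\geq0$, this forces $a_{31}>0$ and $a_{13}<0$. In the same way, richness of $\{2,3\}$ gives $a_{32}>0$ and $a_{23}<0$. We write
$$
    B:=A_{\{1,2\}},\qquad c:=(-a_{13},-a_{23})^\top>0,\qquad r:=(a_{31},a_{32})^\top>0.
$$
Here $B$ is a $2\times2$ Metzler matrix, since its off-diagonal entries $a_{12},a_{21}$ are nonnegative. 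Hence $B$ has a real Perron root $\rho_B$ with a nonnegative, nonzero left eigenvector $v$, i.e.\ $v^\top B=\rho_B v^\top$.

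\emph{Step 2 (every full-support value lies strictly below $\rho_B$).} Let $x>0$ satisfy $Ax=\lambda x$, and put $w:=(x_1,x_2)^\top>0$. The first two rows read $(B-\lambda\Id_2)w=x_3c$, and the right-hand side is strictly positive. Multiplying on the left by $v^\top$ gives
$$
    (\rho_B-\lambda)\,v^\top w=x_3\,v^\top c.
$$
Both $v^\top w$ and $v^\top c$ are positive, because $v\geq0$ is nonzero while $w,c>0$. Hence $\lambda<\rho_B$.

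\emph{Step 3 (a root at or above $\rho_B$).} For $\lambda$ not an eigenvalue of $B$, the Schur complement gives
$$
    \chi(\lambda):=\det(\lambda\Id_3-A)=\det(\lambda\Id_2-B)\bigl(\lambda-a_{33}-r^\top(\lambda\Id_2-B)^{-1}c\bigr).
$$
Clearing the inverse, this becomes the polynomial identity
$$
    \chi(\lambda)=\det(\lambda\Id_2-B)(\lambda-a_{33})-r^\top\adj(\lambda\Id_2-B)\,c.
$$
I will evaluate this at $\lambda=\rho_B$. The first term vanishes. For a $2\times2$ matrix, $\adj(\rho_B\Id_2-B)$ has entries $\rho_B-a_{22}$, $\rho_B-a_{11}$, $a_{12}$, $a_{21}$. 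All of these are nonnegative, because the Perron root of a Metzler matrix dominates its diagonal entries. Hence $\chi(\rho_B)=-r^\top\adj(\rho_B\Id_2-B)c\leq0$. Since $\chi(\lambda)\to+\infty$ as $\lambda\to\infty$, the intermediate value theorem gives a real root $\lambda_*\geq\rho_B$ of $\chi$.

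\emph{Conclusion.} By Step~2, $\lambda_*$ is not produced by $\Ind_3$. The values produced by $\Ind_3$ are distinct roots of the cubic $\chi$ other than $\lambda_*$, so there are at most two of them, and $|\Pi_{\Ind_3}(A)|\leq2$.

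The main obstacle is Step~3: it needs the sign $\chi(\rho_B)\leq0$ and must handle degenerate $B$. In the degenerate case $a_{12}=a_{21}=0$, we have $\rho_B=\max(a_{11},a_{22})$ and the adjugate may have zero entries. The entrywise check above covers this case, since it gives $\chi(\rho_B)\leq0$ without any irreducibility assumption. Even when $\chi(\rho_B)=0$, the root $\rho_B$ itself is excluded by the strict inequality in Step~2, so the count of producible roots is still at most two.
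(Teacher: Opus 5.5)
Your Step~2 is correct: every value produced by $\Ind_3$ lies strictly below $\rho_B$. Step~3, however, rests on a sign error. In $\lambda\Id_3-A$ the upper-right block is $c$, but the lower-left block is $-r^\top$. So the Schur complement is $\lambda-a_{33}+r^\top(\lambda\Id_2-B)^{-1}c$, and
$$
\chi(\lambda)=\det(\lambda\Id_2-B)(\lambda-a_{33})+r^\top\adj(\lambda\Id_2-B)\,c,
\qquad
\chi(\rho_B)=r^\top\adj(\rho_B\Id_2-B)\,c\geq0 .
$$
With the correct sign the intermediate value argument gives nothing, and the root $\lambda_*\geq\rho_B$ need not exist. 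Take the matrix $A_{252}$ of \eqref{eq:A252-integer}, which satisfies all hypotheses of the lemma. It has $\rho_B=90$ and $\chi(t)=t(t^2-91t+180)$, so $\chi(90)=8100>0$, and its three eigenvalues $0$, $\approx2.02$, $\approx88.98$ all lie below $\rho_B$. This cannot be patched by looking more carefully at $\rho_B$. If $\Ind_3$ produced three values, your Step~2 would place all three roots below $\rho_B$ and force $\chi(\rho_B)>0$, so the sign of $\chi$ at $\rho_B$ can never yield the contradiction.

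The underlying gap is that you use richness only through the sign pattern of Step~1, and that pattern is not enough. The matrix $A_{243}$ of Proposition~\ref{prop:A243} has $a_{12},a_{21}\geq0$, $a_{13},a_{23}<0$ and $a_{31},a_{32}>0$, exactly your Step~1 data. Yet its full support produces three values. There $\{2,3\}$ is not rich, because the negative root of $A_{\{2,3\}}$ violates the outside slack.

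What is needed is the quantitative content of richness: both positive ratio roots of each rich pair satisfy the outside-slack inequality. The paper first normalizes to $a_{33}=0$ and $a_{13}=a_{23}=-1$, and then writes $a=a_{11}$, $b=a_{22}$, $p=a_{12}$, $r=a_{21}$, $u=a_{31}$, $v=a_{32}$ (here $r$ is a scalar, not your vector). In that normal form, richness gives $F=r^2-ar+u\geq0$ and $G=p^2-bp+v\geq0$.

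The unproducible root is then located not above $\rho_B$ but in the closed interval between $\theta_1=b-p$ and $\theta_2=a-r$. The first two eigen-equations give $(\theta_2-t)x=(\theta_1-t)y$, which excludes production on that interval. Meanwhile $\chi(\theta_1)=(\theta_1-\theta_2)G$ and $\chi(\theta_2)=(\theta_2-\theta_1)F$ force a root there. The case $\theta_1=\theta_2$ is handled separately.
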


\begin{proof}
The two singleton productions give
$$
    a_{21},a_{31},a_{12},a_{32}\geq0.
$$
By Lemma~\ref{lem:rich-pair}, the richness assumptions imply
$$
    a_{31},a_{32}>0,
    \qquad
    a_{13},a_{23}<0,
    \qquad
    a_{11},a_{22}>a_{33}.
$$
Subtract $a_{33}\Id_3$ and then use the positive diagonal similarity
with
$$
    D:=\diag(-a_{13},-a_{23},1).
$$
The entries in positions $(1,3)$ and $(2,3)$ become $-1$. The other
signs stated above are unchanged. By
Proposition~\ref{prop:invariance}, it is enough to consider
\begin{equation}\label{eq:normal-form}
    A=
    \begin{pmatrix}
        a&p&-1\\
        r&b&-1\\
        u&v&0
    \end{pmatrix},
    \qquad
    a,b,u,v>0,
    \quad p,r\geq0.
\end{equation}

For the support $\{1,3\}$, write a positive eigenvector as
$(1,z)^\top$. The eigenvalue equations and the outside slack condition
become
$$
    z^2-az+u=0,
    \qquad
    r-z\geq0.
$$
Since the pair is rich, the quadratic has two distinct positive roots
$z_-<z_+$, and both satisfy $z\leq r$. Thus $r\geq z_+$. Since the leading coefficient of the quadratic is $1$ and $r\geq z_+$, we have
\begin{equation}\label{eq:F-nonnegative}
F:=r^2-ar+u=(r-z_-)(r-z_+)\geq0.
\end{equation}
For the support $\{2,3\}$, the same argument gives
\begin{equation}\label{eq:G-nonnegative}
    G:=p^2-bp+v\geq0.
\end{equation}

Set
$$
    \theta_1:=b-p,
    \qquad
    \theta_2:=a-r,
    \qquad
    \chi_A(t):=\det(t\Id_3-A).
$$
Suppose that $t$ is produced by the full support through
$(x,y,z)^\top>0$. The first two equations of
$(A-t\Id_3)(x,y,z)^\top=0$ give
\begin{equation}\label{eq:forbidden-relation}
    (\theta_2-t)x=(\theta_1-t)y.
\end{equation}
If $\theta_1\neq\theta_2$, the two factors in this equality must have
the same sign because $x,y>0$. Thus no point of the closed interval
with endpoints $\theta_1$ and $\theta_2$ can be produced by the full
support.

A direct calculation gives
\begin{equation}\label{eq:endpoint-values}
    \chi_A(\theta_1)=(\theta_1-\theta_2)G,
    \qquad
    \chi_A(\theta_2)=(\theta_2-\theta_1)F.
\end{equation}
By \eqref{eq:F-nonnegative} and \eqref{eq:G-nonnegative}, these two
values have opposite signs or one of them is zero. Hence $\chi_A$ has
a root in the forbidden interval. Since $\chi_A$ has degree $3$, at
most two other distinct roots can be produced by the full support.

Now suppose that $\theta_1=\theta_2=: \theta$. The first two rows of
$A-\theta\Id_3$ are equal, so $\theta$ is an eigenvalue. A positive
kernel vector $(x,y,z)^\top$ would satisfy
$$
    z=rx+py,
    \qquad
    Fx+Gy=0.
$$
Since $F,G\geq0$ and $x,y>0$, this is impossible unless $F=G=0$. If
$(F,G)\neq(0,0)$, the value $\theta$ is not produced by the full
support, so at most the other two eigenvalues can be produced. If
$F=G=0$, the third row of $A-\theta\Id_3$ is $\theta$ times the first
row. Hence, $\dim\ker(A-\theta\Id_3)\geq2$. The algebraic multiplicity
of $\theta$ is then at least two, and $A$ has at most two distinct
eigenvalues. In both cases, $|\Pi_{\Ind_3}(A)|\leq2$.
\end{proof}

\section{The order-three capacity}
\label{sec:capacity}

We first give a matrix with $9$ distinct Pareto eigenvalues. We then
use the preceding lemmas to prove the upper bound.

\begin{lemma}\label{lem:nine-value-example}
The matrix
\begin{equation}\label{eq:c3-witness}
    A_0=
    \begin{pmatrix}
        5&-1&6\\
        6&0&6\\
        4&-1&7
    \end{pmatrix}
\end{equation}
has at least $9$ distinct Pareto eigenvalues.
\end{lemma}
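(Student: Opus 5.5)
The plan is to verify the claim directly through the support criterion \eqref{eq:support-criterion} and the union formula \eqref{eq:support-union}. For each of the seven supports $J\in\Supp_3$, I will compute the eigenvalues of $A_{0,J}$ and the corresponding eigenvectors. I then check positivity of the eigenvector and nonnegativity of the outside slack $A_{J^{\mathrm c},J}u$. Finally I check that the values obtained are pairwise distinct. Since only a lower bound is claimed, I never need to show that a support fails to produce a value; I only need nine distinct produced values.

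\emph{Singletons.} For $\{1\}$, the value is $5$ and the outside column entries $6,4$ are nonnegative. For $\{3\}$, the value is $7$ and the outside entries $6,6$ are nonnegative. These give $5$ and $7$. The support $\{2\}$ is not needed.

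\emph{Pairs.} For $\{1,2\}$ the characteristic polynomial is $t^2-5t+6$, with roots $2$ and $3$. The eigenvectors are $(1,5-\lambda)^\top$, namely $(1,3)^\top$ and $(1,2)^\top$. The row-$3$ slacks are $4-3=1$ and $4-2=2$, both nonnegative. For $\{1,3\}$ the eigenvalue $11$ has eigenvector $(1,1)^\top$, and the row-$2$ slack is $12>0$. For $\{2,3\}$ the polynomial is $t^2-7t+6$, with roots $1$ and $6$. The eigenvectors are $(6,\lambda)^\top$ up to scaling, and the row-$1$ slacks are $-6+6=0$ and $-1+6=5$ after normalising. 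This gives $2,3,11,1,6$; note that the value $1$ is produced with zero slack, which is allowed.

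\emph{Full support.} The characteristic polynomial is
$$
    \chi_{A_0}(t)=t^3-12t^2+23t-12=(t-1)(t^2-11t+12),
$$
so the two new roots are $t_\pm=(11\pm\sqrt{73})/2\approx 9.77,\ 1.23$. To get an eigenvector in closed form, I take the cross product of the first two rows of $A_0-t\Id_3$. This gives
$$
    \bigl(6(t-1),\,6(t-1),\,(t-2)(t-3)\bigr)^\top.
$$
For both $t_\pm$, we have $t>1$ and $t\notin[2,3]$, so this vector is strictly positive. I must also confirm that these two rows are linearly independent at $t_\pm$; this holds because the vector is nonzero. Hence $\{1,2,3\}$ produces $t_\pm$.

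Collecting the values gives $\{1,2,3,5,6,7,11,t_-,t_+\}$. These are nine distinct numbers because $t_\pm$ are irrational. The only step requiring care is the full-support computation: factoring the cubic and establishing positivity of the eigenvector. The cross-product formula reduces that positivity to the sign analysis of $(t-1)$ and $(t-2)(t-3)$ above.
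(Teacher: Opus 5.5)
Your proof follows the paper's: the same nine productions from $\{1\},\{3\},\{1,2\},\{2,3\},\{1,3\},\{1,2,3\}$, with the same eigenvectors and outside slacks. Your irrationality argument for distinctness is a tidy substitute for the paper's chain of inequalities.

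There is one computational slip in the full-support step. The cross product of $(5-t,-1,6)$ and $(6,-t,6)$ is
$$
\bigl(6(t-1),\,6(t+1),\,(t-2)(t-3)\bigr)^\top,
$$
not $\bigl(6(t-1),\,6(t-1),\,(t-2)(t-3)\bigr)^\top$. Your vector is not an eigenvector: the first row of $A_0-t\Id_3$ applied to it gives $12$ for every $t$, not $0$. At the roots of $t^2-11t+12$, the corrected vector equals $6(t-1)\,(1,11-t,1)^\top$, which is exactly the paper's eigenvector.

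The fix does not affect your conclusion. The new middle component satisfies $6(t+1)>0$ because $t_\pm>1$. The corrected vector is nonzero, so the two rows are independent and the rest of your argument stands unchanged.
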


\begin{proof}
Set
$$
    \eta_\pm:=\frac{11\pm\sqrt{73}}{2}.
$$
The table gives $9$ productions. The last column contains the outside
slack $A_{J^{\mathrm c},J}u$.
$$
\renewcommand{\arraystretch}{1.18}
\begin{array}{c|c|c|c}
J&\lambda&u&A_{J^{\mathrm c},J}u\\ \hline
\{1\}&5&1&(6,4)^\top\\
\{3\}&7&1&(6,6)^\top\\
\{1,2\}&2&(1,3)^\top&1\\
\{1,2\}&3&(1,2)^\top&2\\
\{2,3\}&1&(6,1)^\top&0\\
\{2,3\}&6&(1,1)^\top&5\\
\{1,3\}&11&(1,1)^\top&12\\
\{1,2,3\}&\eta_\pm&(1,11-\eta_\pm,1)^\top&\text{--}
\end{array}
$$
For the last line,
$$
    A_0
    \begin{pmatrix}1\\11-t\\1\end{pmatrix}
    =t\begin{pmatrix}1\\11-t\\1\end{pmatrix}
    \quad\Longleftrightarrow\quad
    t^2-11t+12=0.
$$
All the displayed vectors are positive and all outside slacks are
nonnegative. Since $8<\sqrt{73}<9$,
$$
    1<\eta_-<\frac32<2<3<5<6<7
    <\frac{19}{2}<\eta_+<10<11.
$$
Thus the $9$ values are distinct.
\end{proof}

\begin{theorem}\label{thm:c3}
The Pareto capacity in order $n=3$ is
$$
    c_3=9.
$$
\end{theorem}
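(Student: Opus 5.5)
The lower bound $c_3\geq9$ is Lemma~\ref{lem:nine-value-example}, so the work is the upper bound $|\Pi(A)|\leq9$ for every $A\in\R^{3\times3}$. Fix any assigned support map $\tau$. By \eqref{eq:profile-sum} we have $|\Pi(A)|=S_1+S_2+S_3$. The available constraints are:
\begin{itemize}
\item $S_3\leq3$ and $S_1\leq3$, from \eqref{eq:support-count-bound};
\item $S_2\leq5$, from Corollary~\ref{cor:pair-bound};
\item $2S_1+S_2\leq9$, from Lemma~\ref{lem:heredity-two}.
\end{itemize}
Suppose, for contradiction, that $S_1+S_2+S_3\geq10$. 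Since $S_3\leq3$, this forces $S_1+S_2\geq7$. I would then split into cases according to $S_1$.

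\emph{Cases $S_1\leq1$.} Here $S_1+S_2\leq1+5=6$, which is already a contradiction.

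\emph{Case $S_1=2$.} The constraint $2S_1+S_2\leq9$ gives $S_2\leq5$, so we need $S_2=5$ and $S_3=3$. Two singletons, say $\{1\}$ and $\{2\}$, carry assigned values, so both produce. Since $S_2=5$ with each $m_{ij}\leq p_{ij}\leq2$, exactly two of the pairs are rich. Consider the pair heredity $m_1+m_2+m_{12}\leq3$. With $m_1=m_2=1$ it gives $m_{12}\leq1$, so $\{1,2\}$ is not rich. Hence the two rich pairs are $\{1,3\}$ and $\{2,3\}$. Lemma~\ref{lem:interval-obstruction} then gives $p_{123}\leq2$, hence $S_3\leq2$, a contradiction.

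\emph{Case $S_1=3$.} All three singletons produce. Lemma~\ref{lem:all-singletons} gives $S_3\leq1$. Heredity gives $S_2\leq9-6=3$. So the total is at most $7$, a contradiction.

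The only delicate step is the case $S_1=2$: identifying which two pairs must be rich and checking that the hypotheses of Lemma~\ref{lem:interval-obstruction} hold. Assigned values imply production, so the singleton hypotheses are met, and richness is read off from $m_J\leq p_J$. After a simultaneous permutation of the indices (Proposition~\ref{prop:invariance}), the producing singletons can be taken to be $\{1\}$ and $\{2\}$. Everything else is arithmetic on the inequalities listed above.
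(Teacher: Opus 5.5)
Your proof is correct and is essentially the paper's argument: it uses the same inequalities $S_3\le3$, $S_2\le5$, $2S_1+S_2\le9$ to force $(S_1,S_2,S_3)=(2,5,3)$, and then applies Lemma~\ref{lem:interval-obstruction} to the pairs $\{1,3\}$ and $\{2,3\}$. Two small points. First, Lemma~\ref{lem:all-singletons} is superfluous in the case $S_1=3$, since heredity alone gives $S_1+S_2\le6<7$. Second, in the case $S_1=2$ the inference ``$m_{12}\le1$, so $\{1,2\}$ is not rich'' conflates $m_{12}$ with $p_{12}$; what you need, and what does follow from $m_{12}\le1$ and $S_2=5$, is $m_{13}=m_{23}=2$, which makes $\{1,3\}$ and $\{2,3\}$ rich (so the ``exactly two rich pairs'' claim is also unnecessary).
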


\begin{proof}
Lemma~\ref{lem:nine-value-example} gives $c_3\geq9$. Let
$A\in\R^{3\times3}$ and choose an assigned support map $\tau$. We omit
$(A,\tau)$ from the notation.

Assume that $|\Pi(A)|\geq10$. Since $S_3\leq3$, then $
    S_1+S_2\geq 7.$
Lemma~\ref{lem:heredity-two} and Corollary~\ref{cor:pair-bound} give
$$
    2S_1+S_2\leq9\quad \mbox{ and }\quad
    S_2\leq5.
$$
These inequalities imply $S_1\geq2$. If $S_1=3$, then
$2S_1+S_2\leq9$ gives $S_1+S_2\leq6$, a contradiction. Hence
$S_1=2$, and the same inequalities give $S_2=5$. By
\eqref{eq:profile-sum} and $S_3\leq3$, we also have $S_3=3$ and
$|\Pi(A)|=10$.

After relabeling, the two assigned singleton supports are $\{1\}$ and
$\{2\}$. The pair inequality for $\{1,2\}$ gives $m_{12}\leq1$.
Since $S_2=5$ and each pair count is at most two,
$$
    m_{12}=1,
    \qquad
    m_{13}=m_{23}=2.
$$
Thus $\{1,3\}$ and $\{2,3\}$ are rich. By
Lemma~\ref{lem:interval-obstruction}, the full support produces at
most two values. This contradicts $S_3=3$. Therefore
$|\Pi(A)|\leq9$, and $c_3=9$.
\end{proof}

A matrix of order $3$ is therefore full-capacity exactly when it has
$9$ distinct Pareto eigenvalues.

\section{Classification of assigned support profiles}
\label{sec:classification}
We can now state the main classification theorem of the paper, which
determines all possible aggregate and detailed assigned profiles of
full-capacity matrices of order $3$, up to a simultaneous permutation
of the indices.
\begin{theorem}
\label{thm:classification}
Let $A\in\R^{3\times3}$ be full-capacity, and let $\tau$ be an assigned
support map. Up to a simultaneous permutation of the indices, the
detailed assigned profile is one of the following:
\begin{equation}\label{eq:classified-profiles}
\begin{array}{c|c}
\toprule
\text{aggregate profile}&\text{detailed profile}\\
\midrule
(1,5,3)&(1,0,0;1,2,2;3)\\
(2,4,3)&(1,1,0;1,2,1;3)\\
(2,5,2)&(1,1,0;1,2,2;2)\\
\bottomrule
\end{array}
\end{equation}
\end{theorem}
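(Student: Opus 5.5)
The plan is to first pin down the aggregate profile by arithmetic, then the detailed profile by the structural lemmas of Section~\ref{sec:structure}. Since $A$ is full-capacity, Theorem~\ref{thm:c3} gives $S_1+S_2+S_3=9$. We have the constraints $S_3\le 3$, $S_1\le 3$ (each singleton produces at most one value), $2S_1+S_2\le 9$ (Lemma~\ref{lem:heredity-two}) and $S_2\le 5$ (Corollary~\ref{cor:pair-bound}).

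\emph{Aggregate step.} I will list the admissible triples.
\begin{itemize}[leftmargin=*]
\item If $S_1=0$, then $S_2+S_3\le 8$, which is impossible.
\item If $S_1=1$, then $S_2+S_3=8$ forces $(1,5,3)$.
\item If $S_1=2$, then $S_2\le 5$ and $S_2+S_3=7$ give $(2,5,2)$ or $(2,4,3)$.
\item If $S_1=3$, then $S_2\le 3$ and $S_3=6-S_2\le 3$ force $(3,3,3)$.
\end{itemize}
The triple $(3,3,3)$ is excluded by Lemma~\ref{lem:all-singletons}: all three singletons produce, so $S_3\le|\Pi_{\Ind_3}(A)|\le 1$. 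This leaves exactly $(1,5,3)$, $(2,4,3)$ and $(2,5,2)$, and it also disposes of the other listed triples. For example, $(1,6,2)$ and $(0,6,3)$ violate $S_2\le5$, and $(3,5,1)$ violates $2S_1+S_2\le 9$.

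\emph{Detailed step.} I will handle each aggregate profile in turn, relabelling so that the assigned singletons come first.
\begin{itemize}[leftmargin=*]
\item \textbf{Case $(1,5,3)$.} Take $m_1=1$. Then $S_2=5$ with each $m_{ij}\le 2$ forces two rich pairs and one pair with count $1$. The heredity bound $m_1+m_{1j}\le 3$ allows $m_{1j}=2$, so I must show that the pair with count $1$ is one containing $1$. If instead $m_{12}=m_{13}=2$, then $\{1\}$ produces and $\{1,2\}$, $\{1,3\}$ are rich. Lemma~\ref{lem:active-center} then says $\{2,3\}$ produces nothing, so $m_{23}=0$, which contradicts $S_2=5$. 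Hence, up to swapping $2$ and $3$, $m_{12}=1$ and $m_{13}=m_{23}=2$. This is $(1,0,0;1,2,2;3)$.
\item \textbf{Cases with $S_1=2$.} Take $m_1=m_2=1$. Lemma~\ref{lem:heredity-two} applied to $\{1,2\}$ gives $m_{12}\le 1$.
\begin{itemize}
\item For $(2,5,2)$: $S_2=5$ forces $m_{12}=1$ and $m_{13}=m_{23}=2$. This is $(1,1,0;1,2,2;2)$, and it is consistent with Lemma~\ref{lem:interval-obstruction}.
\item For $(2,4,3)$: $S_3=3$ together with Lemma~\ref{lem:interval-obstruction} rules out $m_{13}=m_{23}=2$. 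With $m_{12}\le1$ and $S_2=4$, the only option is $m_{12}=1$ and $\{m_{13},m_{23}\}=\{1,2\}$. Swapping $1$ and $2$ normalizes this to $(1,1,0;1,2,1;3)$.
\end{itemize}
\end{itemize}

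\emph{Main obstacle.} The only delicate point is bookkeeping. Richness of a pair is a property of $p_J$, not of $m_J$, so I need to check that $m_{ij}=2$ implies richness. It does, because $m_{ij}\le p_{ij}\le 2$. I also need the hypotheses of Lemmas~\ref{lem:active-center} and~\ref{lem:interval-obstruction} ("$\{i\}$ produces") to follow from $m_i=1$, which holds since $m_i\le p_i$. With these checks in place, every case is forced by the inequalities already established.
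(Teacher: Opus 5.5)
Your proposal is correct and follows essentially the same route as the paper. The paper also fixes the aggregate profile using $S_1+S_2+S_3=9$, $S_2\le5$, $S_3\le3$, $2S_1+S_2\le9$ and Lemma~\ref{lem:all-singletons}, and then forces the detailed profiles with the heredity bound $m_{12}\le1$ and Lemmas~\ref{lem:active-center} and~\ref{lem:interval-obstruction}. Your explicit checks that $m_{ij}=2$ implies richness and that $m_i=1$ implies production are the bookkeeping the paper uses implicitly.
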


\begin{proof}
First consider the aggregate profile. Since $|\Pi(A)|=9$,
$$
    S_1+S_2+S_3=9.
$$
The bounds $S_2\leq5$ and $S_3\leq3$ imply $S_1\geq1$. If $S_1=3$,
then \eqref{eq:2S1S2} gives $S_2\leq3$, so $S_2=S_3=3$. All $3$
singleton supports then produce, and Lemma~\ref{lem:all-singletons}
contradicts $S_3=3$. Thus $S_1\in\{1,2\}$.

If $S_1=1$, then $(S_2,S_3)=(5,3)$. If $S_1=2$, then
$S_2+S_3=7$, so $(S_2,S_3)$ is either $(4,3)$ or $(5,2)$. These are
the $3$ aggregate profiles in \eqref{eq:classified-profiles}.

It remains to determine the pair counts. We shall use the fact that
$m_{ij}=2$ makes $\{i,j\}$ rich.

Suppose first that $(S_1,S_2,S_3)=(1,5,3)$. The pair counts are a
permutation of $(2,2,1)$. Put the assigned singleton at index $1$. If
the two rich pairs were $\{1,2\}$ and $\{1,3\}$, then
Lemma~\ref{lem:active-center} would show that $\{2,3\}$ produces no
value. This is impossible because its pair count is one. After
possibly interchanging indices $2$ and $3$, the detailed profile is
$$
    (1,0,0;1,2,2;3).
$$

Now suppose that $(S_1,S_2,S_3)=(2,4,3)$. Put the two assigned
singletons at indices $1$ and $2$. The pair inequality for $\{1,2\}$
gives $m_{12}\leq1$. If $m_{12}=1$, the other two pair counts are $2$
and $1$, up to interchanging indices $1$ and $2$. Hence, the detailed
profile is
$$
    (1,1,0;1,2,1;3).
$$
If $m_{12}=0$, then $m_{13}=m_{23}=2$. The two pairs are rich, and
Lemma~\ref{lem:interval-obstruction} gives $m_{123}\leq2$, a
contradiction.

Finally, suppose that $(S_1,S_2,S_3)=(2,5,2)$. Again $m_{12}\leq1$.
The $3$ pair counts sum to five and each is at most two. Therefore
$$
    (m_{12},m_{13},m_{23})=(1,2,2),
$$
and the detailed profile is $
    (1,1,0;1,2,2;2),$
which completes the proof of Theorem~\ref{thm:classification}.
\end{proof}

\section{Regular realizations}
\label{sec:realizations}

We shall use the next lemma to check regular production. In the tables,
we write $12$, $13$, $23$, and $123$ for the corresponding supports.
{Throughout this section,
$$
    \chi_J(t):=\det(t\Id_{|J|}-A_J).
$$
For a square matrix $M$, we denote by $\adj(M)$ its adjugate matrix.}

The next lemma gives an explicit certificate for regular production
based on the adjugate matrix.
\begin{lemma}\label{lem:adjugate-certificate}
Let $J\in\Supp_n$, and let $\lambda$ be a simple real root of
$\chi_J$. Suppose that, for a coordinate vector $e_k$ and a sign
$\sigma\in\{-1,1\}$,
$$
    v(t):=\adj(t\Id_{|J|}-A_J)e_k
$$
satisfies
$$
    \sigma v(\lambda)>0,
    \qquad
    \sigma A_{J^{\mathrm c},J}v(\lambda)>0.
$$
Then $J$ regularly produces $\lambda$.
\end{lemma}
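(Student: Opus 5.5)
The plan is to use the adjugate identity
$$
    (t\Id_{|J|}-A_J)\adj(t\Id_{|J|}-A_J)=\chi_J(t)\Id_{|J|}.
$$
Evaluating it at $t=\lambda$ and using $\chi_J(\lambda)=0$ gives $(\lambda\Id-A_J)v(\lambda)=0$. Hence $A_Jv(\lambda)=\lambda v(\lambda)$, and every column of $\adj(\lambda\Id-A_J)$ lies in $\ker(\lambda\Id-A_J)$.

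Next, set $u:=\sigma v(\lambda)$. By hypothesis $u>0$; in particular $u\neq0$. Since $A_J$ is linear, $A_Ju=\lambda u$. Multiplying by $\sigma$ also gives
$$
    A_{J^{\mathrm c},J}u=\sigma A_{J^{\mathrm c},J}v(\lambda)>0.
$$
Thus \eqref{eq:support-criterion} holds with strict outside inequalities, and $J$ produces $\lambda$.

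Finally, $\lambda$ is assumed to be a simple root of $\chi_J$, so it is an algebraically simple eigenvalue of $A_J$. Together with the strict positivity of the outside slack, this is exactly Definition~\ref{def:regular}, so $J$ regularly produces $\lambda$. When $J=\Ind_n$, the outside condition is vacuous and only $u>0$ is needed.

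Two minor points need care. For $|J|=1$, the adjugate of a $1\times1$ matrix is $1$ by convention, so $v\equiv1$ and the argument still goes through. One might also want to note that simplicity forces $\adj(\lambda\Id-A_J)$ to have rank one, but this is unnecessary here, because the hypothesis $\sigma v(\lambda)>0$ already rules out the zero vector. There is no real obstacle: the only substantive step is the adjugate identity, and the rest is direct verification of the definitions.
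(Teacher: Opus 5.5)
Your proof is correct and follows the paper's argument: the adjugate identity at $t=\lambda$ gives $(\lambda\Id_{|J|}-A_J)v(\lambda)=0$, and the two sign hypotheses make $\sigma v(\lambda)$ a positive eigenvector with strictly positive outside slack. Your remark is also right: the paper's observation that $\lambda\Id_{|J|}-A_J$ has rank $|J|-1$ is not needed, because $\sigma v(\lambda)>0$ already forces $v(\lambda)\neq0$.
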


\begin{proof}
The adjugate identity gives
$$
    (\lambda\Id_{|J|}-A_J)v(\lambda)=0.
$$
Since $\lambda$ is simple, the matrix
$\lambda\Id_{|J|}-A_J$ has rank $|J|-1$. A nonzero column of its
adjugate is therefore an eigenvector. The two inequalities give a
positive eigenvector and positive outside slacks.
\end{proof}

\subsection{The profile \texorpdfstring{$(1,5,3)$}{(1,5,3)}}

\begin{proposition}\label{prop:A153}
The matrix
\begin{equation}\label{eq:A153}
    A_{153}:=
    \begin{pmatrix}
        16&8&-11\\
        9&-16&5\\
        14&-1&-11
    \end{pmatrix}
\end{equation}
has $9$ distinct regular Pareto eigenvalues and admits an assigned
support map with detailed profile
$$
    (1,0,0;1,2,2;3).
$$
\end{proposition}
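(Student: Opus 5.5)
The plan is to verify the statement directly, support by support, using the criterion \eqref{eq:support-criterion}. Regularity will be certified with Lemma~\ref{lem:adjugate-certificate}, or for small supports by writing the eigenvector explicitly. Finally we check that the nine values obtained are pairwise distinct. The assigned support map is then forced to have the stated profile: each value is assigned to the support that we exhibit producing it.

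\textbf{Singletons.} Column $1$ has outside entries $(9,14)^\top>0$, so $\{1\}$ regularly produces $16$. Column $2$ contains $a_{32}=-1<0$ and column $3$ contains $a_{13}=-11<0$, so $\{2\}$ and $\{3\}$ produce nothing.

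\textbf{Pairs.} In each case we take a positive eigenvector $(1,t)^\top$ and eliminate the eigenvalue, as in Lemma~\ref{lem:rich-pair}.
\begin{itemize}[label={}, leftmargin=0pt]
\item For $\{1,3\}$ the quadratic is $11t^2-27t+14=0$. It has two distinct positive roots $t=(27\pm\sqrt{113})/22$. The outside slack $9+5t$ is positive, so the eigenvalues $(5\pm\sqrt{113})/2$ are both regularly produced.
\item For $\{2,3\}$ the quadratic is $5t^2-5t+1=0$, with roots $t=(5\pm\sqrt5)/10\in(0,1)$. The outside slack is $8-11t$. Since $11(5+\sqrt5)/10<8$ is equivalent to $\sqrt5<25/11$, both slacks are positive. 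The values are $-16+5t$, that is, $(-27\pm\sqrt5)/2$.
\item For $\{1,2\}$ we have $a_{12}a_{21}>0$. Only the Perron root $\sqrt{328}$ has a positive eigenvector, namely $(1,(\sqrt{328}-16)/8)^\top$. Its outside slack $14-(\sqrt{328}-16)/8$ is positive.
\end{itemize}
This gives the counts $m_{12}=1$ and $m_{13}=m_{23}=2$. The eigenvalues in the pair cases are simple because the discriminants are nonzero.

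\textbf{Full support.} I expect this step to be the main obstacle. We compute $\chi_{\Ind_3}(t)=\det(t\Id_3-A_{153})$ explicitly, and show it has three distinct real roots by locating sign changes at integer points. This isolates each root $\lambda_1<\lambda_2<\lambda_3$ in an explicit interval, which also gives simplicity.

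For each root we then need a positive eigenvector. I would first try a fixed adjugate column $v(t)=\adj(t\Id_3-A_{153})e_k$, whose entries are quadratic polynomials in $t$. It suffices to show that these entries share a common sign $\sigma$ on each isolating interval; this can be done by bounding the quadratics on the interval. If no single column works for all three roots, we choose $k$ root by root. Lemma~\ref{lem:adjugate-certificate} then gives regular production, since the outside condition is empty for $J=\Ind_3$.

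\textbf{Distinctness and assignment.} Finally, we compare the numerical locations:
\begin{itemize}[label={}, leftmargin=0pt]
\item $16$ and $\sqrt{328}\approx18.1$;
\item $(5\pm\sqrt{113})/2\approx 7.8,\,-2.8$;
\item $(-27\pm\sqrt5)/2\approx-12.4,\,-14.6$;
\item the three isolating intervals of $\chi_{\Ind_3}$.
\end{itemize}
We check that these values are pairwise separated, refining the intervals if any of them overlaps one of the six explicit values. This yields nine distinct values, each regularly produced by the exhibited support. Assigning each value to that support gives the detailed profile $(1,0,0;1,2,2;3)$.
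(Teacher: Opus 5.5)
Your proposal is correct and follows essentially the same route as the paper: a supportwise check with positive eigenvectors and adjugate certificates, isolation of the roots of $\chi_{\Ind_3}(t)=t^3+11t^2-169t-1883$ by sign changes, and a final disjointness check. For the full support, the paper uses the first adjugate column $(t^2+27t+181,\,9t+169,\,14t+215)^\top$, which is positive on suitably refined isolating intervals. The refinement you anticipate is genuinely needed: the integer interval $(-13,-12)$ contains the $\{2,3\}$-value $(-27+\sqrt5)/2$, which is also a zero of that first adjugate entry. Also, because you determine every $\Pi_J(A)$ exactly, you do not need the appeal to $c_3=9$ that the paper uses to exclude further values.
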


\begin{proof}
The support $\{1\}$ regularly produces $16$. Its outside slacks are
$9$ and $14$. The supports $\{2\}$ and $\{3\}$ do not produce their
diagonal values, because $a_{32}=-1$ and $a_{13}=-11$.

The table lists the characteristic polynomials and intervals used for
the other eight values. The last column gives the values of the
polynomial at the endpoints.
$$
\renewcommand{\arraystretch}{1.15}
\begin{array}{c|c|c|c}
\toprule
J&\chi_J(t)&I=(a,b)&(\chi_J(a),\chi_J(b))\\
\midrule
12&t^2-328&(18,19)&(-4,33)\\
13&t^2-5t-22&(-3,-2)&(2,-8)\\
13&t^2-5t-22&(7,8)&(-8,2)\\
23&t^2+27t+181&(-15,-14)&(1,-1)\\
23&t^2+27t+181&(-25/2,-99/8)&(-1/4,1/64)\\
123&t^3+11t^2-169t-1883&(-49/4,-61/5)&(-21/64,24/125)\\
123&t^3+11t^2-169t-1883&(-119/10,-59/5)&(651/1000,-24/125)\\
123&t^3+11t^2-169t-1883&(13,14)&(-24,651)\\
\bottomrule
\end{array}
$$
The discriminants of the $3$ quadratic polynomials are $1312$,
$113$, and $5$. Their roots are therefore simple. The $3$ cubic
intervals are disjoint and contain sign changes. Hence they contain
$3$ distinct real roots. Since the polynomial has degree $3$,
these are all its roots and they are simple.

For the pair supports, apply
Lemma~\ref{lem:adjugate-certificate} with
$$
\begin{array}{c|c|c}
J&v_J(t)&A_{J^{\mathrm c},J}v_J(t)\\ \hline
12&(t+16,9)^\top&14t+215\\
13&(t+11,14)^\top&9t+169\\
23&(5,t+16)^\top&-11t-136.
\end{array}
$$
The vectors and the slacks are positive at the roots in the listed
intervals. For the second interval for $23$, for instance,
$t<-99/8$ gives $-11t-136>1/8$. The other root of $t^2-328$ lies in
$(-19,-18)$. On that interval, $(t+16,9)^\top$ has components of
opposite signs. Thus $12$ produces only its positive root.

For the full support, the first column of the adjugate is
\begin{equation}\label{eq:A153-vector}
    v(t)=
    \begin{pmatrix}
        t^2+27t+181\\
        9t+169\\
        14t+215
    \end{pmatrix}.
\end{equation}
It is positive on the $3$ cubic intervals. Indeed,
$h(t):=t^2+27t+181$ is increasing for $t>-27/2$, and
$h(-49/4)=5/16>0$. The two affine components are also positive at
$t=-49/4$, and therefore on all $3$ intervals.

The eight intervals and the singleton value $16$ are pairwise
disjoint. They give $9$ distinct regular Pareto eigenvalues, with
profile $1+5+3$. By Theorem~\ref{thm:c3}, there are no further
distinct Pareto eigenvalues.
\end{proof}
We shall also need the spectra of all principal submatrices of
$A_{153}$. Table~\ref{tab:A153-principal-spectra} gives pairwise
disjoint sets containing these spectra, together with numerical
approximations of the eigenvalues. The values in bold are the Pareto
eigenvalues produced by the corresponding support. It follows that
two distinct principal submatrices of $A_{153}$ have no common
eigenvalue.

\begin{table}[htbp]
\centering
\small
\renewcommand{\arraystretch}{1.16}
\begin{tabular}{
@{}c
>{\raggedright\arraybackslash}p{0.48\textwidth}
>{\raggedright\arraybackslash}p{0.34\textwidth}
@{}}
\toprule
$J$
&
Location of $\operatorname{spec}((A_{153})_J)$
&
Numerical approximations
\\
\midrule

$1$
&
$\{16\}$
&
$\boldsymbol{16}$
\\

$2$
&
$\{-16\}$
&
$-16$
\\

$3$
&
$\{-11\}$
&
$-11$
\\

$12$
&
One eigenvalue in each of $(-19,-18)$ and $(18,19)$
&
$-18.110770,\ \boldsymbol{18.110770}$
\\

$13$
&
One eigenvalue in each of $(-3,-2)$ and $(7,8)$
&
$\boldsymbol{-2.815073},\ \boldsymbol{7.815073}$
\\

$23$
&
One eigenvalue in each of $(-15,-14)$ and
$(-25/2,-99/8)$
&
$\boldsymbol{-14.618034},\
 \boldsymbol{-12.381966}$
\\

$123$
&
One eigenvalue in each of $(-49/4,-61/5)$,
$(-119/10,-59/5)$, and $(13,14)$
&
$\boldsymbol{-12.219936},\
 \boldsymbol{-11.818408},\
 \boldsymbol{13.038344}$
\\

\bottomrule
\end{tabular}
\caption{Locations and numerical approximations of the principal
spectra of $A_{153}$. The values in bold are the Pareto eigenvalues
produced by the indicated support.}
\label{tab:A153-principal-spectra}
\end{table}
\subsection{The profile \texorpdfstring{$(2,4,3)$}{(2,4,3)}}

\begin{proposition}\label{prop:A243}
The matrix
\begin{equation}\label{eq:A243}
    A_{243}:=
    \begin{pmatrix}
        0&10&-4\\
        10&9&-6\\
        4&2&-9
    \end{pmatrix}
\end{equation}
has $9$ distinct regular Pareto eigenvalues and admits an assigned
support map with detailed profile
$$
    (1,1,0;1,2,1;3).
$$
\end{proposition}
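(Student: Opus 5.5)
The plan is to follow the scheme used for Proposition~\ref{prop:A153}. I will exhibit nine productions with rational isolating intervals and certify each one by Lemma~\ref{lem:adjugate-certificate}. Then I will check that the intervals are pairwise disjoint and invoke Theorem~\ref{thm:c3} to exclude further values.

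First, the singletons. The support $\{1\}$ regularly produces $0$, since its outside slacks are $a_{21}=10$ and $a_{31}=4$. The support $\{2\}$ regularly produces $9$, with slacks $a_{12}=10$ and $a_{32}=2$. The support $\{3\}$ does not produce, because $a_{13}=-4<0$. This gives $(m_1,m_2,m_3)=(1,1,0)$.

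Next, the pairs, using the relevant adjugate columns.
\begin{itemize}
\item For $12$, $\chi_{12}(t)=t^2-9t-100$, and the second adjugate column is $(10,t)^\top$ with outside slack $2t+40$. Only the positive root, in $(15,16)$, is produced. The other root, in $(-7,-6)$, gives a vector with a negative component.
\item For $13$, $\chi_{13}(t)=t^2+9t+16$, with discriminant $17$. The first column $(t+9,4)^\top$ has slack $10t+66$. Both roots are produced. One lies in $(-3,-2)$. For the other, the margin is tight, so I will use the interval $(-6.6,-6.5)$, where $\chi_{13}$ takes the values $0.16$ and $-0.25$ at the endpoints and $10t+66>0$ there.
\item For $23$, $\chi_{23}(t)=t^2-69$, and the first column $(t+9,2)^\top$ has slack $10t+82$. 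The root in $(8,9)$ is produced. The root near $-8.31$ gives slack $10t+82<0$, so it is not produced.
\end{itemize}
This gives $(m_{12},m_{13},m_{23})=(1,2,1)$.

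For the full support, $\chi_{123}(t)=t^3-153t-724$. I will isolate its three roots by sign changes on the intervals $(14,15)$, $(-8,-7)$ and $(-6.7,-6.65)$. At $t=-6.7$ the value is about $0.34$, and at $t=-6.65$ it is about $-0.63$. The three roots are therefore simple. I will then compute one column of $\adj(t\Id_3-A_{243})$ and check its sign on the three intervals, as was done with \eqref{eq:A153-vector}. I would try the first column and fall back to another column, or to a global sign $\sigma=-1$, if needed. This gives $m_{123}=3$.

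The main obstacle is separation near $-6.6$. The root of $\chi_{13}$ (about $-6.56$) and the middle root of $\chi_{123}$ (about $-6.68$) are close, so the isolating intervals must be fine enough to be disjoint. The $13$ slack $10t+66$ is also only barely positive there. The remaining risk is that the chosen adjugate column changes sign between the cubic roots; if so, a different column must be used on that interval.

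Once the nine intervals $\{0\}$, $\{9\}$, $(15,16)$, $(-3,-2)$, $(-6.6,-6.5)$, $(8,9)$, $(14,15)$, $(-8,-7)$ and $(-6.7,-6.65)$ are checked to be pairwise disjoint, the nine regular values are distinct. Theorem~\ref{thm:c3} then shows there are no other Pareto eigenvalues, and the assigned profile is $(1,1,0;1,2,1;3)$.
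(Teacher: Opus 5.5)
Your proposal is correct and follows essentially the same route as the paper: the same singleton checks, the same adjugate certificates $(10,t)^\top$, $(t+9,4)^\top$, $(t+9,2)^\top$ with slacks $2t+40$, $10t+66$, $10t+82$, and rational isolating intervals. The one check you left open succeeds with the first adjugate column $(t^2-69,\,10t+66,\,4t-16)^\top$, which is negative on $(-8,-7)$ and $(-6.7,-6.65)$ (take $\sigma=-1$) and positive on $(14,15)$; the only other difference is that you rule out further values via Theorem~\ref{thm:c3}, whereas the paper places all twelve principal eigenvalues in disjoint intervals and shows that the three unbolded ones are not produced.
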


\begin{proof}
The supports $\{1\}$ and $\{2\}$ regularly produce $0$ and $9$,
respectively. Their outside slacks are strictly positive. The support
$\{3\}$ does not produce $-9$, because
$$
    a_{13}<0
    \qquad\text{and}\qquad
    a_{23}<0.
$$

Table~\ref{tab:A243-principal-spectra} gives the characteristic
polynomials, root locations, and numerical approximations for all
principal submatrices. The values in bold are the Pareto eigenvalues
produced by the corresponding support.

\begin{table}[htbp]
\centering
\small
\renewcommand{\arraystretch}{1.15}
\begin{tabular}{
@{}c
>{\raggedright\arraybackslash}p{0.19\textwidth}
>{\raggedright\arraybackslash}p{0.42\textwidth}
>{\raggedleft\arraybackslash}p{0.22\textwidth}
@{}}
\toprule
$J$
&
$\chi_J(t)$
&
Interval and endpoint values
&
Approximate eigenvalue
\\
\midrule

$1$
&
$t$
&
$\{0\}$
&
$\boldsymbol{0}$
\\

$2$
&
$t-9$
&
$\{9\}$
&
$\boldsymbol{9}$
\\

$3$
&
$t+9$
&
$\{-9\}$
&
$-9$
\\

$12$
&
$t^2-9t-100$
&
$(-13/2,-32/5)$, with values
$(3/4,-36/25)$
&
$-6.465856$
\\

$12$
&
$t^2-9t-100$
&
$(15,16)$, with values $(-10,12)$
&
$\boldsymbol{15.465856}$
\\

$13$
&
$t^2+9t+16$
&
$(-33/5,-13/2)$, with values
$(4/25,-1/4)$
&
$\boldsymbol{-6.561553}$
\\

$13$
&
$t^2+9t+16$
&
$(-5/2,-12/5)$, with values
$(-1/4,4/25)$
&
$\boldsymbol{-2.438447}$
\\

$23$
&
$t^2-69$
&
$(-17/2,-8)$, with values $(13/4,-5)$
&
$-8.306624$
\\

$23$
&
$t^2-69$
&
$(8,9)$, with values $(-5,12)$
&
$\boldsymbol{8.306624}$
\\

$123$
&
$t^3-153t-724$
&
$(-8,-15/2)$, with values $(-12,13/8)$
&
$\boldsymbol{-7.591235}$
\\

$123$
&
$t^3-153t-724$
&
$(-67/10,-33/5)$, with values
$(337/1000,-212/125)$
&
$\boldsymbol{-6.681971}$
\\

$123$
&
$t^3-153t-724$
&
$(14,143/10)$, with values
$(-122,12307/1000)$
&
$\boldsymbol{14.273206}$
\\

\bottomrule
\end{tabular}
\caption{Locations and numerical approximations of the principal
spectra of $A_{243}$. The values in bold are the Pareto eigenvalues
produced by the indicated support.}
\label{tab:A243-principal-spectra}
\end{table}

The discriminants of the $3$ quadratic polynomials are $481$, $17$,
and $276$. Hence all their roots are simple. The $3$ intervals for
$\chi_{123}$ contain sign changes. They are disjoint, so
$\chi_{123}$ has $3$ distinct real roots. Since it has degree $3$,
these are all its roots and they are simple.

For the pair supports, use the following adjugate certificates:
$$
\renewcommand{\arraystretch}{1.15}
\begin{array}{c|c|c}
J&v_J(t)&A_{J^{\mathrm c},J}v_J(t)\\
\hline
12&(10,t)^\top&40+2t\\
13&(t+9,4)^\top&10t+66\\
23&(t+9,2)^\top&10t+82.
\end{array}
$$
The vectors and the outside slacks are strictly positive at the pair
roots shown in bold in Table~\ref{tab:A243-principal-spectra}.
Therefore these roots are regularly produced.

The negative root of $t^2-9t-100$ has an eigenvector with components
of opposite signs. Hence it is not produced by the support $\{1,2\}$.

The negative root of $t^2-69$ has a positive eigenvector
$$
    (9-\sqrt{69},2)^\top.
$$
However, its outside slack is
$$
    82-10\sqrt{69}<0,
$$
because $\sqrt{69}>41/5$. Hence this root is not produced by the
support $\{2,3\}$. The pair counts are therefore
$$
    m_{12}=1,
    \qquad
    m_{13}=2,
    \qquad
    m_{23}=1.
$$

For the full support, column $1$ of the adjugate is
\begin{equation}\label{eq:A243-vector}
    v(t)=
    \begin{pmatrix}
        t^2-69\\
        10t+66\\
        4t-16
    \end{pmatrix}.
\end{equation}
All $3$ components are negative on the $2$ negative cubic intervals
in Table~\ref{tab:A243-principal-spectra}. Multiplying $v(t)$ by $-1$
therefore gives a positive eigenvector. All $3$ components are
positive on the positive cubic interval. Thus the full support
regularly produces its $3$ eigenvalues.

The locations in Table~\ref{tab:A243-principal-spectra} are pairwise
disjoint. Hence distinct principal submatrices have disjoint spectra.
The table contains all eigenvalues of all principal submatrices. The
$3$ nonbold values are not produced, while the $9$ bold values are
regularly produced. Therefore $A_{243}$ has exactly $9$ distinct
regular Pareto eigenvalues, with detailed profile
$$
    (1,1,0;1,2,1;3).
$$
\end{proof}
\subsection{The profile \texorpdfstring{$(2,5,2)$}{(2,5,2)}}

The family considered below is the order-$3$ case of the family in
\cite[Eq.~(2.5)]{SeegerVicente2011}. In that reference, the parameter
is assumed to satisfy $s>1+\sqrt{2}$ and to be transcendental. In that
proof, the transcendence assumption is used to exclude coincidences
between values produced by supports containing index $1$ and supports
not containing index $1$. In order $3$, the direct calculations below
show that this assumption is not needed. We determine the assigned
support profile and prove regularity of all $9$ assigned values for
every real $s>1+\sqrt{2}$.

For $s>1+\sqrt{2}$, define
\begin{equation}\label{eq:A252-family}
    A_{252}(s):=
    \begin{pmatrix}
        s^4&s^5&-s^3\\
        s^5&s^6&-s^4\\
        s^3&s^4&s^2
    \end{pmatrix}.
\end{equation}
\begin{proposition}
\label{prop:A252-family}
For every real $s>1+\sqrt{2}$, the matrix $A_{252}(s)$ has $9$
distinct regular Pareto eigenvalues and admits an assigned support map
with detailed profile
$$
    (1,1,0;1,2,2;2).
$$
\end{proposition}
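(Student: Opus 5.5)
The plan is to verify each of the seven supports of $A:=A_{252}(s)$ directly, using only elementary algebra in $s$. The key facts will be that rows $1$ and $2$ are proportional ($s\cdot$row $1$ $=$ row $2$) and that the condition $s>1+\sqrt2$ enters through the discriminant for $\{1,3\}$. At the end I will show that the nine values are pairwise distinct. By Theorem~\ref{thm:c3} there are then no further Pareto eigenvalues, so the profile is forced by the counts obtained.

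\emph{Singletons and $\{1,2\}$.}
\begin{itemize}
\item $\{1\}$ regularly produces $s^4$, with outside slacks $s^5,s^3>0$.
\item $\{2\}$ regularly produces $s^6$, with slacks $s^5,s^4>0$.
\item $\{3\}$ produces nothing, since $a_{13}=-s^3<0$.
\item $A_{\{1,2\}}$ has rank one, with eigenvalues $0$ and $s^4+s^6$. The eigenvector $(1,s)^\top$ for $s^4+s^6$ is positive and has slack $s^3+s^5>0$ in row $3$. The kernel vector $(s,-1)^\top$ is not positive. So $m_{12}=1$.
\end{itemize}

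\emph{The pair $\{1,3\}$.} Here $\chi_{13}(t)=t^2-(s^4+s^2)t+2s^6$, with discriminant $s^4(s^4-6s^2+1)$. This is positive exactly when $s^2>3+2\sqrt2=(1+\sqrt2)^2$, which is where the hypothesis is used. Writing an eigenvector as $(1,z)^\top$, the first row gives $z=(s^4-\lambda)/s^3$. The row-$2$ slack is $s^5-s^4z=s^4(s-z)$, which is positive iff $\lambda>0$. Both roots are positive (positive sum and product). Both are also $<s^4$, because $\chi_{13}(s^4)=s^6>0$ and the vertex $(s^4+s^2)/2$ is less than $s^4$. Hence $z>0$ for both roots, and $\{1,3\}$ regularly produces both of them.

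\emph{The pair $\{2,3\}$.} The same computation applies, with $\chi_{23}(t)=t^2-(s^6+s^2)t+2s^8$ and discriminant $s^4(s^8-6s^4+1)>0$ because $s^4>s^2>3+2\sqrt2$. Here $z=(s^6-\lambda)/s^4$, and the row-$1$ slack is $s^5-s^3z=s^3(s^2-z)$, positive iff $\lambda>0$. Both roots lie in $(0,s^6)$ by the analogous evaluation $\chi_{23}(s^6)=s^8>0$. So $m_{12}=1$ and $m_{13}=m_{23}=2$. Simplicity of all these roots follows from the nonzero discriminants, or in the rank-one case from $s^4+s^6\neq0$.

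\emph{The full support.} Since $\operatorname{rank}A\le2$, we have $\chi_A(t)=t\,(t^2-(s^6+s^4+s^2)t+c_2)$, where $c_2$ is the sum of the $2\times2$ principal minors, namely $0+2s^6+2s^8$. The value $0$ is not produced: row $3$ of $Ax=0$ reads $s^3x+s^4y+s^2z=0$, which is impossible for $x>0$. For the two roots of the quadratic factor, I will apply Lemma~\ref{lem:adjugate-certificate} with a suitable column of $\adj(t\Id_3-A)$. Because of the row proportionality, I expect a positive eigenvector of the form $(1,s,z)^\top$ with $z$ an explicit rational function of $t$. The quadratic's discriminant $(s^6+s^4+s^2)^2-8s^6(1+s^2)$ should be positive for $s>1+\sqrt2$, and both roots are positive. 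The positivity of $z$ then reduces to a sign condition on each root that can be settled by evaluating the quadratic at one explicit point.

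\emph{Distinctness (main obstacle).} The hardest step is showing that the nine values
$$s^4,\quad s^6,\quad s^4+s^6,\quad \text{two roots of }\chi_{13},\quad \text{two roots of }\chi_{23},\quad \text{two nonzero roots of }\chi_A$$
are pairwise distinct for every real $s>1+\sqrt2$, without the transcendence assumption of \cite{SeegerVicente2011}. My approach is to locate each root in an explicit interval with endpoints polynomial in $s$, such as $(0,s^2)$, $(s^2,s^4)$, and $(s^4,s^6)$. I will obtain these intervals by evaluating the characteristic polynomials at the endpoints, and then check that the intervals are disjoint. If two of the intervals overlap, as seems possible between a root of $\chi_{23}$ and a root of the full quadratic, I will instead show the two polynomials have no common root. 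To do this I will compute their resultant, a polynomial in $s$, and verify that it has no zero on $(1+\sqrt2,\infty)$.
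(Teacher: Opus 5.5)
Your proposal is correct and follows essentially the paper's route. You check each support directly: the paper first permutes to the Seeger--Vicente form $\widetilde A(s)$ and uses ratio equations, whereas you stay in the original coordinates and use that, for an eigenvector $(1,z)^\top$, the outside slacks of $\{1,3\}$ and $\{2,3\}$ equal $s\lambda$ and $\lambda/s$. You then establish distinctness through pairwise resultants of the three quadratics together with their values at $s^4$, $s^6$, $s^4+s^6$, which is exactly what the paper does.

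The steps you left as expectations do go through. With $q=s^2$, the resultants are $2q^6(q-1)^2$, $2q^8$ and $2q^6$. The full-support quadratic is $\phi(t)=t^2-q(1+q+q^2)t+2q^3(1+q)$, and it satisfies $\phi(q)=q^3(1+q)>0$, $\phi(q^2)=-q^3(q^2-2q-1)<0$ and $\phi(q^3)=-q^3(q-2)(q+1)<0$. Hence $\phi$ has two real roots, both larger than $s^2$, which makes your $z=(s^3+s^5)/(\lambda-s^2)$ positive.
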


\begin{proof}
Permuting the indices in the order $(3,1,2)$ transforms $A_{252}(s)$
into
\begin{equation}\label{eq:Atilde-family}
    \widetilde A(s)=
    \begin{pmatrix}
        s^2&s^3&s^4\\
        -s^3&s^4&s^5\\
        -s^4&s^5&s^6
    \end{pmatrix}.
\end{equation}
This is the matrix in \cite[Eq.~(2.5)]{SeegerVicente2011} for $n=3$.
By Proposition~\ref{prop:invariance}, it is enough to study
$\widetilde A(s)$. Put $
    q:=s^2.$
Then, $
    q>3+2\sqrt{2}>5.$

The support $\{1\}$ does not produce its diagonal value $q$, because
its outside slack is
$$
    \begin{pmatrix}
        -s^3\\
        -s^4
    \end{pmatrix}.
$$
The supports $\{2\}$ and $\{3\}$ regularly produce $q^2$ and $q^3$.
Their outside slacks are, respectively,
$$
    \begin{pmatrix}
        s^3\\
        s^5
    \end{pmatrix}
    \qquad\text{and}\qquad
    \begin{pmatrix}
        s^4\\
        s^5
    \end{pmatrix},
$$
and are strictly positive.

For the support $\{1,2\}$, write a positive eigenvector as
$(1,z)^\top$. The ratio $z$ satisfies
\begin{equation}\label{eq:A252-ratio-12}
    f_{12}(z):=
    z^2-\left(s-\frac1s\right)z+1=0.
\end{equation}
Since
$$
    s-\frac1s>2,
$$
the discriminant is positive. The sum and product of the roots are
positive. Hence both roots are positive. Let
$$
    0<z_-<z_+
$$
be these roots. Since $f_{12}(1)<0$, we have
$$
    z_-<1<z_+.
$$
Moreover,
$$
    f_{12}\left(\frac1s\right)=\frac{2}{s^2}>0.
$$
Since $1/s<1$, it follows that
$$
    \frac1s<z_-<1<z_+.
$$
The outside slack is
$$
    s^4(sz-1),
$$
which is strictly positive at both roots. The associated eigenvalue is
$$
    \lambda=q+s^3z.
$$
The roots therefore give $2$ distinct simple eigenvalues. Thus
$\{1,2\}$ regularly produces $2$ values.

For the support $\{1,3\}$, the ratio equation is
\begin{equation}\label{eq:A252-ratio-13}
    f_{13}(z):=
    z^2-\left(s^2-\frac1{s^2}\right)z+1=0.
\end{equation}
Since
$$
    q-\frac1q>2,
$$
the discriminant is positive, and both roots are positive. Let
$$
    0<w_-<w_+
$$
be the roots. Since $f_{13}(1)<0$ and
$$
    f_{13}\left(\frac1{s^2}\right)=\frac{2}{s^4}>0,
$$
we obtain
$$
    \frac1{s^2}<w_-<1<w_+.
$$
At either root $w$, the outside slack is
$$
    s^3(s^2w-1)>0.
$$
The associated eigenvalue is
$$
    \lambda=q+q^2w.
$$
Hence $\{1,3\}$ regularly produces $2$ values.

The principal submatrix on $\{2,3\}$ is entrywise positive and has
rank $1$. The vector $(1,s)^\top$ gives the simple nonzero eigenvalue
$$
    q^2+q^3.
$$
The outside slack is
$$
    s^3+s^5>0.
$$
The zero eigenvalue has eigenvectors proportional to $(-s,1)^\top$,
so it is not produced. Thus $\{2,3\}$ regularly produces exactly $1$
value.

For the full support,
$$
    \det(t\Id_3-\widetilde A(s))=t\,\phi(t),
$$
where
$$
    \phi(t)=t^2-q(1+q+q^2)t+2q^3(1+q).
$$
Its discriminant is
$$
    q^2\left((1+q+q^2)^2-8q(1+q)\right).
$$
Since $q>5$,
$$
    q^3>25q>8(q+1).
$$
Therefore,
$$
    (1+q+q^2)^2>q^4>8q(1+q),
$$
and the discriminant is positive. Thus $\phi$ has $2$ distinct real
roots. Their sum and product are positive, so both roots are positive.

Let
$$
    0<\lambda_-<\lambda_+
$$
be the roots of $\phi$. We have
$$
    \phi(2q)=2q^2>0
$$
and
$$
    \phi'(2q)=q(3-q-q^2)<0.
$$
Hence $2q$ lies to the left of the vertex of $\phi$.
Since the leading coefficient of $\phi$ is equal to $1$, the polynomial
is negative between its two roots. It follows that
$$
    2q<\lambda_-<\lambda_+.
$$

For either root $\lambda$, direct substitution gives the eigenvector
\begin{equation}\label{eq:A252-full-vector}
    u_\lambda=
    \begin{pmatrix}
        \dfrac{\lambda}{s^2(\lambda-2s^2)}\\[3pt]
        \dfrac1s\\[2pt]
        1
    \end{pmatrix}.
\end{equation}
This vector is positive because $\lambda>2s^2$. The roots of $\phi$
are simple and nonzero. The zero eigenvalue is simple because
$$
    \phi(0)=2q^3(1+q)>0.
$$
Its kernel is generated by $(0,-s,1)^\top$, so it is not produced by
the full support. Hence the full support regularly produces the $2$
nonzero roots of $\phi$.

We now prove that the $9$ produced values are distinct. The $2$ values
from $12$, the $2$ values from $13$, and the $2$ nonzero values from
$123$ are the roots of
$$
    p_{12}(t)=t^2-(q+q^2)t+2q^3,\;\;
    p_{13}(t)=t^2-(q+q^3)t+2q^4,\;\;
    p_{123}(t)=t^2-q(1+q+q^2)t+2q^3(1+q).
$$
Their pairwise resultants are
$$
    \Res_t(p_{12},p_{13})=2q^6(q-1)^2,\;\;
    \Res_t(p_{12},p_{123})=2q^8,\;\;
    \Res_t(p_{13},p_{123})=2q^6,
$$
where for polynomials $f$ and $g$, we denote by $\Res_t(f,g)$ their resultant with respect to $t$.
These resultants are nonzero because $q>5$. Hence the $3$ polynomials
have pairwise disjoint root sets.

Their values at the other $3$ produced values are
$$
\renewcommand{\arraystretch}{1.15}
\begin{array}{c|ccc}
& q^2&q^3&q^2+q^3\\
\hline
p_{12}
& q^3
& q^3(q^3-q^2-q+2)
& q^3(q^3+q^2-q+1)
\\
p_{13}
& -q^3(q^2-3q+1)
& q^4
& q^3(q^2+2q-1)
\\
p_{123}
& -q^3(q^2-2q-1)
& -q^3(q-2)(q+1)
& q^3(q+1).
\end{array}
$$
All these quantities are nonzero. Indeed, $q>5$ gives
$$
\begin{gathered}
    q^3-q^2-q+2>0,
    \qquad
    q^3+q^2-q+1>0,\\
    q^2-3q+1>0,
    \qquad
    q^2+2q-1>0,
    \qquad
    q^2-2q-1>0,
\end{gathered}
$$
and $
    q-2>0.$
Also,
$$
    q^2<q^3<q^2+q^3.
$$
The roots within each polynomial are distinct, the $3$ root sets are
pairwise disjoint, and none of these roots is equal to one of the
remaining $3$ produced values. Therefore the $9$ produced values are
distinct.

The assigned profile of $\widetilde A(s)$ is
$$
    (0,1,1;2,2,1;2).
$$
After returning to the original order of the indices, the assigned
profile of $A_{252}(s)$ is
$$
    (1,1,0;1,2,2;2).
$$
\end{proof}

For $s=3$, positive scaling gives the smaller integer matrix
\begin{equation}\label{eq:A252-integer}
    A_{252}:=\frac19A_{252}(3)=
    \begin{pmatrix}
        9&27&-3\\
        27&81&-9\\
        3&9&1
    \end{pmatrix}.
\end{equation}
Under the same permutation $(3,1,2)$,
$$
    \frac19\widetilde A(3)=
    \begin{pmatrix}
        1&3&9\\
        -3&9&27\\
        -9&27&81
    \end{pmatrix},
$$
which is the order-$3$ example in
\cite[Example~1.2]{SeegerVicente2011}.

Propositions~\ref{prop:A153}, \ref{prop:A243}, and
\ref{prop:A252-family} give a regular realization of every profile in
Theorem~\ref{thm:classification}.

\section{Open realizations and intrinsic profiles}
\label{sec:stability}

Let $\Pi^{\rm reg}(A)$ denote the set of regular Pareto eigenvalues of
$A$. Following \cite{BaillonSeeger2020}, the regular Pareto capacity is
defined by
$$
    c_n^{\rm reg}
    :=
    \max_{A\in\R^{n\times n}}
    |\Pi^{\rm reg}(A)|.
$$
It is known that
$$
    c_3^{\rm reg}=c_3=9.
$$
Hence each matrix constructed in Section~\ref{sec:realizations}
maximizes both the number of Pareto eigenvalues and the number of
regular Pareto eigenvalues. However, this equality gives no information
on how the $9$ values are distributed among their producing supports.

Baillon and Seeger \cite{BaillonSeeger2020} proved the supportwise lower stability of regular
Pareto eigenvalues. More precisely, if $\lambda$ is regularly produced
by a fixed support $J$ for a matrix $A$, then there are a neighborhood
$\mathcal V$ of $A$ and a continuous function $\psi$ on $\mathcal V$
such that $\psi(A)=\lambda$ and $\psi(B)$ is regularly produced by the
same support $J$ for every $B\in\mathcal V$. This is Step~1 in the
proof of \cite[Proposition~3]{BaillonSeeger2020}. 

Kielstra~\cite{Kielstra2023} introduced a related Pareto profile and proved that it is
constant on each connected component of the set of
$1$-principally simple matrices
\cite[Definition~3.6.6 and Theorem~3.6.7]{Kielstra2023}. The profile
used there counts, for each support, all values produced by that
support. By contrast, an assigned profile counts each distinct value
only once, after choosing one of its producing supports. The two
notions agree when every Pareto eigenvalue has a unique producing
support, but they may differ in general.

The purpose of this section is to apply the supportwise stability
result to the $3$ regular realizations obtained above. We first show
that each classified profile occurs on a nonempty Euclidean-open set.
We then give an explicit Zariski-open condition under which producing
supports are unique. Combining the two results gives a nonempty open
set on which each profile is intrinsic.

\subsection{Open regular realizations}
We next show that each classified detailed profile admits a nonempty
Euclidean-open set of regular realizations.
\begin{proposition}
\label{cor:open-realizations}
For each detailed profile displayed in
Theorem~\ref{thm:classification}, there is a nonempty Euclidean-open
set $
    \mathcal V_{\mathbf p}\subset\R^{3\times3}$
such that every matrix in $\mathcal V_{\mathbf p}$ has $9$ distinct
regular Pareto eigenvalues and admits an assigned support map with
detailed profile $\mathbf p$.
\end{proposition}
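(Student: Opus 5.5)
The plan is to start from the three explicit regular realizations of Section~\ref{sec:realizations}: $A_{153}$ from Proposition~\ref{prop:A153}, $A_{243}$ from Proposition~\ref{prop:A243}, and $A_{252}$ from Proposition~\ref{prop:A252-family} (say with $s=3$). Each of them has $9$ distinct values $\lambda_1,\dots,\lambda_9$, and each $\lambda_\ell$ is regularly produced by a fixed support $J_\ell$. The supports are chosen so that the map $\lambda_\ell\mapsto J_\ell$ is an assigned support map with detailed profile $\mathbf p$. Fix one such reference matrix $A$ for a given profile $\mathbf p$.

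For each $\ell$, the supportwise lower stability of \cite[Proposition~3]{BaillonSeeger2020} applies. It gives a Euclidean neighborhood $\mathcal V_\ell$ of $A$ and a continuous function $\psi_\ell$ on $\mathcal V_\ell$ such that $\psi_\ell(A)=\lambda_\ell$ and $\psi_\ell(B)$ is regularly produced by $J_\ell$ for every $B\in\mathcal V_\ell$. If one prefers a self-contained argument, this can be recovered directly. A simple eigenvalue of $B_{J_\ell}$ depends continuously on $B$, and so does a suitably normalized eigenvector, for instance an adjugate column as in Lemma~\ref{lem:adjugate-certificate}. The strict inequalities $u>0$ and $B_{J_\ell^{\mathrm c},J_\ell}u>0$ are open conditions, and algebraic simplicity is preserved under small perturbations.

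Next I intersect and shrink. Set $\delta:=\tfrac12\min_{\ell\neq\ell'}|\lambda_\ell-\lambda_{\ell'}|>0$. By continuity, there is an open neighborhood $\mathcal V_{\mathbf p}\subset\bigcap_\ell\mathcal V_\ell$ of $A$ on which $|\psi_\ell(B)-\lambda_\ell|<\delta$ for every $\ell$. For $B\in\mathcal V_{\mathbf p}$, the nine values $\psi_\ell(B)$ are then pairwise distinct regular Pareto eigenvalues. By Theorem~\ref{thm:c3}, $|\Pi(B)|\le 9$, so $\Pi(B)=\{\psi_1(B),\dots,\psi_9(B)\}$ and every Pareto eigenvalue of $B$ is regular. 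The map $\tau_B(\psi_\ell(B)):=J_\ell$ is well defined because the values are distinct. It is an assigned support map, since $\psi_\ell(B)\in\Pi_{J_\ell}(B)$, and its counts $m_J(B,\tau_B)$ coincide with those of $A$. Hence its detailed profile is $\mathbf p$.

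The only delicate point is the continuity of the chosen eigenvalue branch together with a positive eigenvector. This is exactly what the cited Step~1 of \cite[Proposition~3]{BaillonSeeger2020} supplies, so I expect no real obstacle. I would take care to state that the upper bound $c_3=9$ is what rules out additional Pareto eigenvalues appearing after perturbation. I would also note that $\mathcal V_{\mathbf p}$ is nonempty because it contains the reference matrix.
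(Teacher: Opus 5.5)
Your proposal is correct and follows essentially the same route as the paper. Both arguments start from the regular realizations of Section~\ref{sec:realizations} and apply the Baillon--Seeger supportwise lower stability to each assigned pair $(\lambda_\ell,J_\ell)$. Both then shrink the intersection of the neighborhoods so that the nine branches $\psi_\ell(B)$ stay distinct (your explicit $\delta$ just makes the paper's ``after reducing the neighborhoods'' concrete). Finally, both invoke $c_3=9$ to exclude additional Pareto eigenvalues before assigning $\psi_\ell(B)$ to $J_\ell$.
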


\begin{proof}
Fix one of the detailed profiles $\mathbf p$, and let
$A_{\mathbf p}$ be its regular realization from
Section~\ref{sec:realizations}. Let $
    \lambda_1,\ldots,\lambda_9$
be its $9$ assigned Pareto eigenvalues, and let $
    J_1,\ldots,J_9$
be their assigned supports.

For each $i$, apply the supportwise lower-stability result from
\cite[Proposition~3]{BaillonSeeger2020} to the pair
$(\lambda_i,J_i)$. We obtain a neighborhood $\mathcal V_i$ of
$A_{\mathbf p}$ and a continuous function
$$
    \psi_i:\mathcal V_i\longrightarrow\R
\mbox{ such that }
    \psi_i(A_{\mathbf p})=\lambda_i
$$
and $\psi_i(B)$ is regularly produced by $J_i$ for every
$B\in\mathcal V_i$.

The values $\lambda_1,\ldots,\lambda_9$ are distinct. After reducing
the neighborhoods, the values $
    \psi_1(B),\ldots,\psi_9(B)$
remain distinct on $
    \mathcal V_{\mathbf p}
    :=
    \displaystyle\bigcap_{i=1}^9\mathcal V_i.
$
Thus every $B\in\mathcal V_{\mathbf p}$ has at least $9$ distinct
regular Pareto eigenvalues. By Theorem~\ref{thm:c3}, it has exactly
$9$ distinct Pareto eigenvalues. Assigning $\psi_i(B)$ to $J_i$
therefore gives an assigned support map with the same detailed profile
$\mathbf p$.
\end{proof}

\begin{remark}
The preceding proposition has a different scope from
\cite[Theorem~3.6.7]{Kielstra2023}. Kielstra \cite{Kielstra2023} proves constancy of a
supportwise profile on a whole connected component of the set of
$1$-principally simple matrices. Proposition~\ref{cor:open-realizations}
is a local result for assigned profiles and does not require the
reference matrix to be $1$-principally simple.

For example, the principal submatrix of $A_{252}(s)$ on $\{1,2\}$
and the full matrix $A_{252}(s)$ both have the eigenvalue $0$.
Therefore $A_{252}(s)$ is not $1$-principally simple. This common
eigenvalue is not produced by either support, and all $9$ assigned
Pareto eigenvalues remain regular. Hence the assigned profile still
persists on a neighborhood of $A_{252}(s)$.
\end{remark}

\subsection{A Zariski-open separation condition}

Baillon and Seeger \cite{BaillonSeeger2021} call a matrix \emph{spectrally separable} if every
Pareto eigenvalue has a unique producing support. They also observe
that spectrally separable matrices form a dense subset of the space of
matrices \cite[Section~1.1]{BaillonSeeger2021}. The next proposition
does not claim this density result as new. It gives an explicit
Zariski-open sufficient condition for spectral separability.

This condition is also related to the principally simple matrices of
Kielstra \cite[Section~3.5]{Kielstra2023}. The two conditions are not
identical. Principal simplicity requires simplicity of the real
eigenvalues of every principal submatrix and disjointness of the real
spectra of distinct principal submatrices. The condition below requires
disjointness of their complex spectra, but it does not require
simplicity within a fixed principal submatrix.

We now give an explicit algebraic condition that separates the spectra
of distinct principal submatrices and therefore ensures uniqueness of
the producing support.
\begin{proposition}
\label{prop:generic-uniqueness}
There is a nonempty Zariski-open set $
    \mathcal U\subset\R^{3\times3}$
which is also Euclidean-open and dense, such that
$$
    \operatorname{spec}(A_J)
    \cap
    \operatorname{spec}(A_K)
    =
    \varnothing
$$
for every $A\in\mathcal U$ and every pair of distinct supports
$J,K\in\Supp_3$. Consequently, every Pareto eigenvalue of a matrix in
$\mathcal U$ has a unique producing support.
\end{proposition}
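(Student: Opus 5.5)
The plan is to define $\mathcal U$ by resultants. For each unordered pair of distinct supports $J,K\in\Supp_3$, set
$$
    R_{J,K}(A):=\Res_t\bigl(\chi_J(t),\chi_K(t)\bigr),
$$
where $\chi_J(t)=\det(t\Id_{|J|}-A_J)$. The coefficients of $\chi_J$ are polynomials in the entries of $A$, and $\chi_J$ is monic, so its leading coefficient never vanishes. Hence $R_{J,K}$ is a polynomial function on $\R^{3\times3}$. Moreover, $R_{J,K}(A)\neq0$ exactly when $\chi_J$ and $\chi_K$ have no common complex root, that is, when $\operatorname{spec}(A_J)\cap\operatorname{spec}(A_K)=\varnothing$. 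Put
$$
    \mathcal U:=\Bigl\{A\in\R^{3\times3}:\prod_{\{J,K\}}R_{J,K}(A)\neq0\Bigr\}.
$$
This set is Zariski-open by construction and Euclidean-open because polynomials are continuous.

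To prove that $\mathcal U$ is nonempty, it suffices to exhibit a single matrix at which the product does not vanish. I would take $A_{243}$. Table~\ref{tab:A243-principal-spectra} locates all eigenvalues of all principal submatrices in pairwise disjoint real intervals or points. All principal spectra are real there, since the quadratic discriminants are positive and the cubic has three real roots. Hence distinct supports have disjoint spectra, so $A_{243}\in\mathcal U$. Table~\ref{tab:A153-principal-spectra} gives the same conclusion for $A_{153}$.

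Density follows from a standard fact. The product $P:=\prod R_{J,K}$ is a nonzero polynomial, because it is nonzero at $A_{243}$. Its zero set therefore has empty interior, since a polynomial vanishing on an open set is identically zero. So the complement $\mathcal U$ is dense.

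For the consequence, suppose $\lambda\in\Pi_J(A)\cap\Pi_K(A)$ with $J\neq K$. By \eqref{eq:support-criterion}, $\lambda$ is an eigenvalue of both $A_J$ and $A_K$, which contradicts the disjointness of their spectra. Hence each Pareto eigenvalue has a unique producing support.

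The only step needing care is the nonemptiness witness. The singleton, pair and triple polynomials have different degrees, but resultants of monic polynomials of any degrees are well defined, so this causes no difficulty. I expect the rest to be routine.
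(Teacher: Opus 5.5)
Your proposal is correct and follows essentially the same route as the paper: you take the product of the pairwise resultants of the principal characteristic polynomials, certify nonemptiness by a tabulated witness, get density because a nonzero polynomial cannot vanish on an open set, and derive the uniqueness consequence from the support criterion. The only difference is that your primary witness is $A_{243}$, whereas the paper uses $A_{153}$. That substitution is harmless, since Table~\ref{tab:A243-principal-spectra} also places every principal eigenvalue in pairwise disjoint real locations.
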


\begin{proof}
For distinct supports $J,K\in\Supp_3$, define
$$
    R_{J,K}(A)
    :=
    \Res_t\!\left(
        \det(t\Id_{|J|}-A_J),
        \det(t\Id_{|K|}-A_K)
    \right).
$$
This is a polynomial in the $9$ entries of $A$. Let $R(A)$ be the
product of $R_{J,K}(A)$ over all unordered pairs of distinct supports
$J,K\in\Supp_3$.

The polynomial $R$ is not identically zero.
Table~\ref{tab:A153-principal-spectra} contains all eigenvalues of all
principal submatrices of $A_{153}$, and these $7$ spectra are pairwise
disjoint. Hence, $
    R(A_{153})\neq0.$

Set $
    \mathcal U
    :=
    \{A\in\R^{3\times3}:R(A)\neq0\}.$
This set is nonempty and Zariski-open. It is Euclidean-open by
continuity of $R$. It is dense because the zero set of a nonzero real
polynomial has empty Euclidean interior.

Let $A\in\mathcal U$. Since, $
    R_{J,K}(A)\neq0
$
for every pair of distinct supports, the resultant criterion gives
$$
    \operatorname{spec}(A_J)
    \cap
    \operatorname{spec}(A_K)
    =
    \varnothing.
$$
A Pareto eigenvalue produced by $J$ is an eigenvalue of $A_J$.
Therefore the same Pareto eigenvalue cannot be produced by a distinct
support $K$.
\end{proof}

\begin{remark}
The set $\mathcal U$ is a concrete principal Zariski-open subset of
the set of spectrally separable matrices. Its defining condition is
stronger than spectral separability, since it separates all complex
principal eigenvalues, including those which are not Pareto
eigenvalues. This stronger condition is useful because it is expressed
by one nonvanishing polynomial.

The set $\mathcal U$ is also different from the set of principally
simple matrices considered in \cite{Kielstra2023}. In particular,
$\mathcal U$ does not exclude a multiple eigenvalue inside one fixed
principal submatrix. Its role here is only to ensure uniqueness of the
producing support.
\end{remark}

\subsection{Intrinsic profile sets}
Combining the two preceding results, we obtain a nonempty intrinsic
open set for each classified detailed profile.
\begin{theorem}
\label{cor:intrinsic-open}
For each detailed profile $\mathbf p$ displayed in
Theorem~\ref{thm:classification}, there is a nonempty Euclidean-open
set $
    \mathcal O_{\mathbf p}\subset\R^{3\times3}$
such that every matrix in $\mathcal O_{\mathbf p}$ satisfies the
following properties:
\begin{enumerate}[label={\rm (\roman*)}]
    \item it is full-capacity;
    \item its $9$ Pareto eigenvalues are regular;
    \item every Pareto eigenvalue has a unique producing support;
    \item its intrinsic detailed support profile is $\mathbf p$.
\end{enumerate}
The $3$ sets $\mathcal O_{\mathbf p}$ are pairwise disjoint.
\end{theorem}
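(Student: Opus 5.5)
The plan is to take $\mathcal O_{\mathbf p}:=\mathcal V_{\mathbf p}\cap\mathcal U$. Here $\mathcal V_{\mathbf p}$ is the open set of Proposition~\ref{cor:open-realizations}, and $\mathcal U$ is the Zariski-open separation set of Proposition~\ref{prop:generic-uniqueness}.

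The first step is to show that this intersection is nonempty. Both sets are Euclidean-open. Since $\mathcal U$ is dense, it meets the nonempty open set $\mathcal V_{\mathbf p}$. The intersection is therefore a nonempty open set, and this works even when the reference matrix $A_{\mathbf p}$ is not itself in $\mathcal U$; for instance, $A_{252}(s)$ has a common eigenvalue $0$ for the supports $\{1,2\}$ and $\Ind_3$. This step has no real obstacle; density is exactly what is needed.

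Next I verify (i)--(iv) for $B\in\mathcal O_{\mathbf p}$.

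For (i) and (ii), Proposition~\ref{cor:open-realizations} gives $9$ distinct regular Pareto eigenvalues $\psi_1(B),\dots,\psi_9(B)$. By Theorem~\ref{thm:c3}, these are all the Pareto eigenvalues of $B$, so $B$ is full-capacity and every Pareto eigenvalue is regular.

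For (iii), I use $B\in\mathcal U$: each Pareto eigenvalue has a unique producing support.

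For (iv), uniqueness of producing supports means there is exactly one assigned support map $\tau$, namely $\lambda\mapsto$ its unique producing support. So the detailed profile of $B$ does not depend on any choice; this is the intrinsic profile. The assignment $\psi_i(B)\mapsto J_i$ from Proposition~\ref{cor:open-realizations} is a valid assigned support map. It must therefore coincide with $\tau$, and the intrinsic profile is $\mathbf p$.

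The one point requiring care is the notion of profile used in the disjointness claim. The detailed profiles in Theorem~\ref{thm:classification} are representatives up to simultaneous permutation, so I treat the $3$ sets as indexed by the three fixed detailed profiles. On $\mathcal O_{\mathbf p}$ the intrinsic detailed profile equals $\mathbf p$ exactly, not merely up to permutation, because the map $\psi_i(B)\mapsto J_i$ assigns each value to the same labelled support $J_i$.

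For disjointness, I look at the aggregate profiles $(1,5,3)$, $(2,4,3)$ and $(2,5,2)$. These are permutation invariant and pairwise distinct. A matrix in $\mathcal O_{\mathbf p}\cap\mathcal O_{\mathbf p'}$ would have a single intrinsic profile equal to both $\mathbf p$ and $\mathbf p'$. Their aggregates differ, which is impossible. Hence the three sets are pairwise disjoint.
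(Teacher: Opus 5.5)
Your proposal is correct and follows the paper's proof. You take $\mathcal O_{\mathbf p}=\mathcal V_{\mathbf p}\cap\mathcal U$, get nonemptiness from the density of $\mathcal U$, and obtain (i)--(iv) from Proposition~\ref{cor:open-realizations}, Theorem~\ref{thm:c3}, and the uniqueness of producing supports on $\mathcal U$. Your disjointness argument via the three distinct aggregate profiles is just a more explicit version of the paper's remark that one matrix cannot have two different intrinsic profiles.
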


\begin{proof}
Fix one of the detailed profiles $\mathbf p$. Let
$\mathcal V_{\mathbf p}$ be the nonempty Euclidean-open set given by
Proposition~\ref{cor:open-realizations}, and let $\mathcal U$ be the
open dense set from Proposition~\ref{prop:generic-uniqueness}. Set
$$
    \mathcal O_{\mathbf p}
    :=
    \mathcal V_{\mathbf p}\cap\mathcal U.
$$
Since $\mathcal U$ is dense and $\mathcal V_{\mathbf p}$ is nonempty
and open, the set $\mathcal O_{\mathbf p}$ is nonempty. It is also
Euclidean-open.

Every matrix in $\mathcal V_{\mathbf p}$ has $9$ distinct regular
Pareto eigenvalues and admits an assigned support map with profile
$\mathbf p$. Every matrix in $\mathcal U$ has a unique producing
support for each Pareto eigenvalue. Hence, on
$\mathcal O_{\mathbf p}$, the assigned profile is intrinsic and equal
to $\mathbf p$.

Finally, two different intrinsic profiles cannot occur for the same
matrix. Therefore the $3$ sets $\mathcal O_{\mathbf p}$ are pairwise
disjoint.
\end{proof}

Theorem~\ref{cor:intrinsic-open} does not describe the connected
components of the full-capacity set. It only shows that each of the
$3$ profiles contains a nonempty open subset on which the support
profile is intrinsic. Describing the full semialgebraic decomposition
and the boundaries between these subsets remains an open problem.

\section{Conclusion}

We have classified the assigned support profiles of full-capacity
matrices of order $3$. The classification holds for every assigned
support map. The possible aggregate profiles are
$$
    (1,5,3),
    \qquad
    (2,4,3),
    \qquad
    (2,5,2).
$$
Up to a simultaneous permutation of the indices, the corresponding
detailed profiles are
$$
    (1,0,0;1,2,2;3),
    \qquad
    (1,1,0;1,2,1;3),
    \qquad
    (1,1,0;1,2,2;2).
$$

The proof uses $3$ main restrictions. The graph of rich pair supports
is triangle-free. If all $3$ singleton supports produce, then the full
support produces at most $1$ value. If $2$ singleton supports produce
and the $2$ adjacent pair supports are rich, then the full support
produces at most $2$ values. These restrictions also give a short
alternative proof that $
    c_3=9.$

We gave an explicit regular realization of each profile. The
supportwise lower-stability result of Baillon and Seeger \cite{BaillonSeeger2020} then shows
that each profile occurs on a nonempty Euclidean-open set. We also
gave an explicit Zariski-open condition which separates the spectra of
distinct principal submatrices. By combining these results, we proved
that each profile occurs intrinsically on a nonempty Euclidean-open
set. These $3$ intrinsic profile sets are pairwise disjoint. This
profile-specific conclusion does not follow from the equality $
    c_3^{\rm reg}=c_3$
alone, since that equality does not distinguish the producing
supports.

Several questions remain open. A first problem is to describe the
semialgebraic subsets of the full-capacity set on which the $3$
profiles occur. This includes their connected components and their
boundaries, where a Pareto eigenvalue may have more than $1$ producing
support. It would also be useful to compare these subsets with the
components of the principally simple and $1$-principally simple
matrices studied in \cite{Kielstra2023}. A further question is whether a comparably explicit classification of
full-capacity support profiles can be obtained in higher order.

\section*{Acknowledgements}
The author gratefully acknowledges support from the Math AmSud project N°51756TF (VIPS), ECOS Project C24E06 and the FMJH Gaspard Monge Program for Optimization and Data Science.


\end{document}